\definecolor{darkred}{rgb}{0.6,0,0}
\definecolor{darkgreen}{rgb}{0,0.5,0}
\definecolor{darkmagenta}{rgb}{0.5,0,0.5}
\newcommand{\abs}[1]{\left\vert#1\right\vert}
\newcommand{\dott}{\, \cdot\,}
\newcommand{\indicM}[1]{\mathbf{1}_{#1}}
\newcommand{\R}{\mathbb R}
\newcommand{\N}{\mathbb N}
\newcommand{\Dy}{\Delta y}
\newcommand{\seq}[1]{\left\{#1\right\}}
\newcommand{\norm}[1]{\left\|#1\right\|}
\newcommand{\lipnorm}[1]{\norm{#1}_{\mathrm{Lip}}}
\DeclareMathOperator{\sign}{sign}
\newcommand{\sgn}[1]{\sign\left(#1\right)}
\newcommand{\test}{\varphi}
\newcommand{\DM}{\Delta^-_i}
\newcommand{\DP}{\Delta^+_i}
\newcommand{\Dm}{D^-}
\newcommand{\Dp}{D^+}
\newcommand{\intom}{\int_{\Pi_T}\!\!}
\newcommand{\intomeg}{\int_{\Omega}\!}
\newcommand{\intomega}{\int_{\Omega_T}\!\!}
\newcommand{\baru}{\bar{u}}
\numberwithin{equation}{section}     
\newtheorem{theorem}{Theorem}[section]
\newtheorem{lemma}[theorem]{Lemma}
\newtheorem{definition}[theorem]{Definition}
\newtheorem{remark}[theorem]{Remark}
\newtheorem{corollary}[theorem]{Corollary}
\newtheorem*{remark*}{Remark}
\numberwithin{equation}{section}     
\begin{document}
\title[]{Models for dense multilane vehicular traffic}

\author[Holden]{Helge Holden} \address[Holden]{\newline Department of
  Mathematical Sciences, NTNU Norwegian University of Science and
  Technology, NO--7491 Trondheim, Norway}
\email[]{\href{helge.holden@ntnu.no}{helge.holden@ntnu.no}}
\urladdr{\href{https://www.ntnu.edu/employees/helge.holden}{https://www.ntnu.edu/employees/helge.holden}}

\author[Risebro]{Nils Henrik Risebro} \address[Risebro]{\newline
  Department of Mathematics, University of Oslo, P.O.\ Box 1053,
  Blindern, NO--0316 Oslo, Norway }
\email[]{\href{nilshr@math.uio.no}{nilshr@math.uio.no}}
\urladdr{\href{https://www.mn.uio.no/math/english/people/aca/nilshr/index.html}
{https://www.mn.uio.no/math/english/people/aca/nilshr/index.html}}

\date{\today}

\subjclass[2010]{Primary: 35L60; Secondary:  35L65, 35L67, 82B21}

\keywords{Lighthill--Whitham--Richards model, multilane traffic flow, continuum limit.}

\thanks{Research was supported by the grant {\it Waves and Nonlinear Phenomena (WaNP)} (no 250070) from the Research Council of Norway.}

\begin{abstract}
  We study vehicular traffic on a road with multiple lanes and dense,
  unidirectional traffic following the traditional
  Lighthill--Whitham--Richards model where the velocity in each lane
  depends only on the density in the same lane.  The model assumes
  that the tendency of drivers to change to a neighboring lane is
  proportional to the difference in velocity between the lanes. The
  model allows for an arbitrary number of lanes, each with its
  distinct velocity function.

  The resulting model is a well-posed weakly coupled system of
  hyperbolic conservation laws with a Lipschitz continuous source. We
  show several relevant bounds for solutions of this model that are
  not valid for general weakly coupled systems.
  
  Furthermore, by taking an appropriately scaled limit as the number
  of lanes increases, we derive a model describing a continuum of
  lanes, and show that the $N$-lane model converges to a weak solution
  of the continuum model.
\end{abstract}

\maketitle

\section{Introduction} \label{sec:intro}
The Lighthill--Whitham--Richards (LWR) model for unidirectional
traffic on a single road, see
\cite{LW_II,richards}, reads 
\begin{equation} \label{eq:lwr}
u_t+(u v(u))_x=0, 
\end{equation}
where $u=u(t,x)$ denotes the density of vehicles at the position $x$
and time $t$, and $v=v(u)$ is a given velocity function. The LWR-model
expresses conservation of vehicles and is a well-established
model for dense unidirectional single lane vehicular traffic on a
homogeneous road without exits and entries.  Furthermore, it serves as
the standard textbook example to gain intuition regarding the behavior
of solutions of scalar one-dimensional hyperbolic conservation laws,
see, e.g., \cite{HoldenRisebro}.

Given the importance of vehicular traffic
modeling in modern society, it is no wonder that the LWR-model has
been generalized to describe several important scenarios in 
dense traffic flow. Indeed, ``traffic hydrodynamics'' has become a
research field in its own right, where the flow of vehicles is modeled
by conservation laws or balance equations. In the general context, the
LWR-model is the simplest model among the many hydrodynamic traffic
models. Among the other models often used is the Aw--Rascle model
\cite{AwRascle}, which is a system of conservation laws where
the velocity $v$ is not a given function of $u$, but satisfies a
second conservation law. It is thus considerably more complicated than
the simple LWR-model. 
For a general
introduction to how conservation laws are used in traffic modeling,
see \cite{MR1338371,MR3553143}  
and the many references
therein. 


In this paper we introduce a new model for multilane dense vehicular
traffic where the underlying model for each lane remains the
LWR-model. Our basic assumption is that drivers prefer to drive
faster, and that the tendency of a vehicle to change lane is
proportional to the difference in velocity between neighboring lanes.
If \eqref{eq:lwr} describes the density of vehicles in a particular
lane, the multilane behavior is described by a source term, accounting
for lane changes. The result is thus a system of weakly coupled scalar
conservation laws.

More precisely, consider two lanes denoted $1$ and $2$, the model we
study, reads
\begin{align*}
  \partial_t u_{1}+\partial_x(u_1  v_1(u_1))&=-S(u_1,u_{2}), \\
  \partial_t u_{2}+\partial_x(u_2  v_2(u_2))&=S(u_1,u_{2}),
\end{align*}
where the change of lanes is codified in 
\begin{equation*}
  S(u_1,u_{2}) = K(v_{2}(u_2)-v_1(u_1))\cdot
  \begin{cases}
    u_1 & v_{2}(u_2)\ge v_1(u_1),\\
    u_{2} & v_{2}(u_2)<v_1(u_1).
  \end{cases}
\end{equation*}
Here $u_i$ denotes the density in lane $i$. 
The system constitutes a weakly coupled $2\times 2$ system of  one-dimensional  hyperbolic conservation laws, and there is ample theory available for systems of this type, see 
Section~\ref{sec:cont2}.  The system readily generalizes to an arbitrary number of lanes, see Section \ref{sec:cont2a}.  We show that the general system with $N$ lanes has a unique 
entropy solution, and that the solution is well-posed in the sense that one has a surprising $L^1$ stability
\begin{equation*}
  \sum_{i=1}^N \norm{u_i(t)-\baru_i(t)}_{L^1(\R)} 
  \le   \sum_{i=1}^N
  \norm{u_{i,0}-\baru_{i,0}}_{L^1(\R)},
\end{equation*}
for two solutions $u_i$ and $\baru_i$, see Theorems
\ref{thm:existence} and \ref{thm:L1contr}. Note that the $L^1$
stability \textit{does not} hold in general for systems of balance
laws, that is, hyperbolic conservation laws with source.

The models invites for considering the continuum limit where the number of lanes increases to infinity. We organize the parallel lanes along the $x$-axis, and measure the distance 
between the lanes along the $y$-axis.  The distance between the lanes is scaled as $\Dy=1/N$,  where $N$ denotes the number of lanes. For simplicity we assume that the velocity function is given by 
$v_i(u)=-k(y_i)g(u)$ where $y_i=i\Dy$, and $-g(u)$ is the velocity function. We scale the function such that $g(0)=-1$ and $g(1)=0$. We need to scale the constant $K$ as $K=1/\Dy^2$. 
We consider given initial data $u_0\colon\R\times [0,1] \to [0,1]$, where the initial data for lane $i$ is $u_{i,0}$ is given by \eqref{eq:init_N} and with solution $u_i$.   We interpolate this
function to $u_{\Dy}$ where $u_{\Dy}\colon[0,\infty)\times\R\times [0,1] \to [0,1]$. We assume that $k$ is smooth and positive with  $k'(0)=k'(1)=0$.  In Theorem \ref{thm:infinite_theorem} we show 
that $u_{\Dy}\to u$ where $u$ is a weak solution of 
\begin{equation*}
  \begin{cases}
    u_t + kf(u)_x + (k' f(u))_y = \left(k u g_y\right)_y,\\
    g(u)_y|_{y=0,1} = 0,\\
    u|_{t=0}=u_0,
  \end{cases}
\end{equation*}
where the \emph{flux function} $f$ is defined as $f(u)=uv(u)$.  This
equation is an interesting anisotropic and degenerate parabolic
equation with non-trivial boundary conditions in the $y$-direction.

There is a plethora of approaches to the modeling of multilane dense traffic, and the most
relevant to our approach here can be found in
\cite{HelbingGreiner,MR1671940,MR1671918,MR770381}, using either kinetic models or the
Aw--Rascle model or variations thereof, or \cite{MR2273100,MR1969676} where  more involved
source terms modeling the change of lanes, are employed. See \cite{Moridpour} for a survey of
various models for lane changing.

The rest of this paper is organized as follows: In
Section~\ref{sec:cont2} we detail the two-lane case, and show that
$u_i\in [0,1]$ is an invariant region. In Section~\ref{sec:cont2a} we
state the $N$-lane model, and prove a number of estimates on the
solution. Finally, in Section~\ref{sec:inftylane}, we study the limit
as $N\to\infty$.  See \cite{MR3811561} for a model for two-dimensional  traffic flow on highways. 
Analogously to the analysis of numerical schemes for
degenerate parabolic equations, we establish enough estimates on the
solution, enabling us to conclude that a limit exists, and that this
limit is a weak solution of a degenerate convection-diffusion
equation. All sections are illustrated by numerical examples.

\section{A continuum model for two-lane vehicular traffic} \label{sec:cont2} 

Consider a road with two  lanes, each with its own velocity function. The lanes are
homogeneous,  and traffic on the road is unidirectional. We assume that the vehicular 
traffic is dense,  allowing for a continuum formulation. Let $u_i$ and
$v_i=v_i(u_i)$ denote the density and velocity, respectively, in
lane $i$. 

In this paper we focus on the interaction between the two lanes. We assume that
drivers prefer to drive in the faster lane, and the 
tendency of a vehicle to
change lane is proportional to the difference in velocity. Thus the
flow \emph{from} lane $1$ \emph{to} lane $2$ equals
\begin{align}
  S(u_1,u_{2}) &= K(v_{2}(u_2)-v_1(u_1))\cdot
  \begin{cases}
    u_1 & v_{2}(u_2)\ge v_1(u_1),\\
    u_{2} & v_{2}(u_2)<v_1(u_1),
  \end{cases}\notag\\
  &=K\left[\left(v_2(u_2)-v_1(u_1)\right)^+ u_1 -
    \left(v_2(u_2)-v_1(u_1)\right)^- u_2 \right] \label{eq:diffdef}
\end{align}
where  $K$ is a constant, $(a)^+=\max\seq{a,0}$ and $(a)^-=-\min\seq{a,0}$.  The
flow \emph{from} lane $2$ \emph{to} lane $1$ equals
$-S(u_1,u_{2})$.   The classical Lighthill--Whitham--Richards model 
implies the following model describing the two-lane traffic
\begin{subequations} \label{eq:ligningen}
\begin{align}
  \partial_t u_{1}+\partial_x(u_1
  v_1(u_1))&=-S(u_1,u_{2}),\label{eq:ligningen1a} \\
  \partial_t u_{2}+\partial_x(u_2
  v_2(u_2))&=S(u_1,u_{2}), \label{eq:ligningen1b}
\end{align}
\end{subequations} 
where $x$ is the position along the road and $t$ denotes time.
This $2\times 2$ system of hyperbolic conservation laws is weakly coupled with a
Lipschitz continuous source term. 

The velocities $v_i=v_i(u_i)$ are strictly decreasing positive
functions, and we assume that they are scaled such that
$v_1(1)=v_2(1)=0$. For simplicity, we scale space and time such that $K=1$. 

It is well-known that  this system in general only allows for weak solutions $u_i\in L^1(\R)\cap BV(\R)$, the set of integrable functions of finite total variation, see, e.g.,  \cite{HoldenRisebro}.  Furthermore, the issue of uniqueness of the solution is non-trivial and one needs to require that the solution satisfies an entropy condition. 
\begin{definition} \label{def:entropy2lane}
  Let $v_i=v_i(u_i)$ be strictly decreasing positive
  functions such that $v_1(1)=v_2(1)=0$. Assume that $u_{i,0}\in
  L^1([0,1])\cap BV([0,1])$ for $i=1,2$. We say that $u_{i}\in
  C([0,\infty); L^1(\R))$ with $u_i(t, \dott)\in BV(\R)$ for $t\in
  [0,\infty)$ is a weak solution of \eqref{eq:ligningen} with initial
  data $u_{i,0}$ if
  \begin{align*}
    \int_0^\infty \int_\R\big( u_{1}\test_t+u_1
    v_1(u_1)\test_x-S(u_1,u_2)\test\big)\, dxdt+ \int_\R u_{1,0}\test|_{t=0}\, dx&=0, \\
    \int_0^\infty \int_\R\big( u_{2}\test_t+u_2
    v_2(u_2)\test_x+S(u_1,u_2)\test\big)\, dxdt+ \int_\R
    u_{2,0}\test|_{t=0}\, dx&=0,
  \end{align*}
  for all compactly supported test functions $\test\in
  C^\infty_0([0,\infty)\times \R)$.

  The solution is called an entropy solution if
  \begin{subequations} \label{eq:kruz}
    \begin{align}
      \int_0^\infty \int_\R\big(
      \eta(u_{1})\test_t+q_1(u_1)\test_x\big)\, dxdt+& \int_\R
      \eta(u_{1,0})\test|_{t=0}\, dx  \notag\\ 
      &\qquad\ge\int_0^\infty \int_\R \eta'(u_1)\test S(u_1,u_{2})\, dxdt, \\
      \int_0^\infty \int_\R\big(
      \eta(u_{2})\test_t+q_2(u_2)\test_x\big)\, dxdt+& \int_\R
      \eta(u_{2,0})\test_{t=0}\, dx \notag\\ 
      &\qquad\ge -\int_0^\infty \int_\R\eta'(u_2)\test S(u_1,u_{2})\,
      dxdt,
    \end{align}
  \end{subequations}
  for all convex functions $\eta$ where $q_i$ satisfies
  $q_i'(u)=\eta'(u)f_i'(u)$ with $f_i(u)=u v_i(u)$, and for all
  compactly supported non-negative test functions $\test\in
  C^\infty_0([0,\infty)\times \R)$, $\test\ge0$.
\end{definition}
\begin{remark}
  It suffices that \eqref{eq:kruz} holds for $\eta$ of the form
  $\eta(u)=\abs{u-k}$ for all constants $k\in\R$, see \cite[Remark
  2.1]{HoldenRisebro}. In that case $q_i(u)=\sgn{u-k}(f_i(u)-f_i(k))$.
\end{remark}
\begin{remark}
  The existence and uniqueness of entropy solutions to
  \eqref{eq:ligningen} follows by Theorem~\ref{thm:existence} below.
\end{remark}

We will throughout the paper use the following notation:
\begin{equation}
a^\pm= \frac12\big(\abs{a}\pm a \big), \quad H(a)=\indicM{[0,\infty)}(a),
\end{equation}
where $\indicM{M}$ is the indicator (characteristic) function of a set $M$.
Note that
\begin{align*}
  0&\le a^\pm\le \abs{a}, \quad \abs{a}=a^+ +a^-, \quad a=a^+ -a^-,
  \quad a^+ a^-=0,
  \quad (\mp a)^- = (\pm a)^+,\\
  &H(x)+H(-x)=1, \quad (x^+)'=H(x), \quad (x^-)'=-H(-x), \; x\ne 0.
\end{align*}
We shall also employ the convention that $C$ denotes a ``generic''
finite positive constant, independent of critical parameters, whose
actual value may change from one occurrence to the next. Similarly, we use
$C_\alpha$ to denote a positive function $c(\alpha)<\infty$ for $\alpha<\infty$.

This  model \eqref{eq:ligningen} has the natural invariant region $u\in [0,1]$. This is the content of the following lemma. 
\begin{lemma} \label{lem:invariant}
  Let $u_1$ and $u_2$ be entropy solutions in the sense of 
  Definition~\ref{def:entropy2lane}, with initial data $u_{i,0}$ for
  $i=1,2$. If $u_{i,0}(x)\in [0,1]$ for all $x$ and $i=1,2$, then 
  $u_i(t,x)\in [0,1]$ for all $x$  and for $t>0$.
\end{lemma}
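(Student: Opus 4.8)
The plan is to show that $u_i \equiv 0$ and $u_i \equiv 1$ are appropriate sub-/super-solutions (or equivalently, that the flux and source conspire to keep solutions in $[0,1]$), and then invoke the entropy inequalities with the Kruzkov-type entropies $\eta(u) = |u-k|$ for the boundary values $k=0$ and $k=1$. Let me sketch this.

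First I would examine the source term $S$ on the boundaries of the invariant region to check the sign conditions. The key structural observation is that $S(u_1,u_2)$ in \eqref{eq:diffdef} is written so that the \emph{outflow} from a lane is always multiplied by the density \emph{in that same lane}: the $(v_2-v_1)^+ u_1$ term (transfer out of lane $1$) carries the factor $u_1$, and the $(v_2-v_1)^- u_2$ term (transfer out of lane $2$) carries $u_2$. Consequently $S(0,u_2) = -(v_2(u_2)-v_1(0))^- u_2 \le 0$, so the right-hand side $-S$ of \eqref{eq:ligningen1a} is $\ge 0$ when $u_1=0$; likewise $S(u_1,0) = (v_2(0)-v_1(u_1))^+ u_1 \ge 0$, so the right-hand side $+S$ of \eqref{eq:ligningen1b} is $\ge 0$ when $u_2=0$. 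This says the source never pushes a lane's density below $0$. At the upper boundary, using $v_1(1)=v_2(1)=0$, one checks the complementary signs: when $u_1=1$ the source $-S(1,u_2)=-(v_2(u_2)-0)^+ \le 0$ (since $v_2(u_2)\ge 0$), and when $u_2=1$ the source $S(u_1,1)=-(v_2(u_1))\cdots$, which is $\le 0$ by the same positivity of velocities. So the source never pushes a density above $1$ either.

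The next step is to turn these sign conditions into an $L^1$-type comparison using the entropy inequalities \eqref{eq:kruz}. Taking $\eta(u)=(u)^- = \max\{-u,0\}$ (a convex function, so that $\eta'(u)=-H(-u)$) in the first inequality of \eqref{eq:kruz} and integrating against a suitable test function $\test\ge 0$ approximating the indicator of a time interval, I would obtain $\frac{d}{dt}\int_\R (u_1)^-\,dx \le -\int_\R \eta'(u_1)\,S(u_1,u_2)\,dx = \int_\R H(-u_1)\,S(u_1,u_2)\,dx$. On the set $\{u_1 < 0\}$ the indicator $H(-u_1)=1$, and I need to argue the integrand is $\le 0$ there; this follows because once $u_1$ is at or below $0$, the outflow-times-own-density structure makes the relevant contribution vanish or have the favorable sign (the $u_1$ factor in the positive part of $S$ is $\le 0$). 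Since $(u_{1,0})^-=0$ by hypothesis, Gronwall gives $(u_1)^-\equiv 0$, i.e.\ $u_1\ge 0$. Symmetrically, choosing $\eta(u)=(u-1)^+$ in the first inequality yields $u_1\le 1$, and the analogous choices in the second inequality of \eqref{eq:kruz} give $0\le u_2\le 1$.

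The main obstacle, and the place requiring the most care, is the coupling: the source $S(u_1,u_2)$ depends on \emph{both} densities, so the estimate for $u_1$ alone is not closed. The clean way around this is to run the four entropy estimates \emph{simultaneously} and add them, forming the quantity $\sum_{i=1}^2\big((u_i)^- + (u_i-1)^+\big)$, whose time derivative is controlled by the sum of the four source contributions. The point is that the specific algebraic form of $S$ makes this \emph{total} boundary-violation functional non-increasing: on any region where some $u_i$ has left $[0,1]$, the sign analysis above shows the aggregated source terms cannot increase the functional, so it stays zero for all time given that it is zero initially. I would also need to justify that the entropy inequalities may be localized in time (via a standard approximation of $\indic{0\le t\le\tau}$ by smooth $\test$) and that $\eta'$ may be taken to be the discontinuous functions $-H(-u)$ and $H(u-1)$ rather than smooth convex $\eta$ — both are routine limiting arguments, the latter using the remark that it suffices to test against $\eta(u)=|u-k|$ for $k\in\{0,1\}$ together with the identities $(u)^-=\tfrac12(|u|-u)$ and $(u-1)^+=\tfrac12(|u-1|+(u-1))$.
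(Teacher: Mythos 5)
Your proposal is correct and follows essentially the same route as the paper: Kru\v{z}kov entropies $\eta(u)=u^-$ and $\eta(u)=(u-1)^+$, test functions approximating $\indicM{[0,\tau]}$, and---crucially---summing the estimates over both lanes so that the sign-indefinite coupling terms in $S$ either cancel (when both densities violate the same bound) or acquire a favorable sign, followed by Gronwall. The only cosmetic difference is that the paper runs the argument in two sequential stages (first $u_i\ge 0$, then $u_i\le 1$ using the already-established non-negativity in the sign analysis of $S$), whereas you suggest optionally aggregating all four entropy estimates at once; the two-stage version is the cleaner way to make your sign claims at the upper boundary rigorous.
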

\begin{proof}
  To show that $u_i\ge 0$ if $u_{i,0}\ge 0$ we use the entropy
  $\eta(u)=u^-$. Then
  \begin{equation*}
    \partial_t (u_i)^- + \partial_x q_i^-(u_i) = (-1)^i H(-u_i) S(u_1,u_2)
  \end{equation*}
  in $\mathcal{D}'$ for $i=1,\,2$. We use a non-negative test function
  $\test(x,t)\approx \indicM{[0,\tau]}$ to find that
  \begin{equation*}
    \int_\R (u_i(\tau,x))^- \,dx \le \int_\R (u_{i,0}(x))^- \,dx
    + (-1)^{i+1} 
    \int_0^\tau\int_\R H(-u_i)S(u_1,u_2)\,dxdt,
  \end{equation*}
   Adding these two equations and using that
  $(u_{i,0})^-=0$, we get
  \begin{equation*}
    \int_\R (u_1(\tau,x))^- + (u_2(\tau,x))^- \,dx \le
    \int_0^\tau\int_\R r(u_1,u_2)\,dxdt, 
  \end{equation*}
  with
  \begin{equation*}
    r(u_1,u_2)=S(u_1,u_2)(H(-u_1)-H(-u_2)).
  \end{equation*}
  We have that
  \begin{align*}
    r(u_1,u_2)&=
    \begin{cases}
      0 & u_1 < 0\ \text{and}\ u_2< 0,\\
      0 & u_1 > 0\ \text{and}\ u_2> 0,\\
      -\left[\left(v_2(u_2)-v_1(u_1)\right)^+ u_1 -
    \left(v_2(u_2)-v_1(u_1)\right)^- u_2 \right]  & u_2 \le 0 < u_1,\\
     \left[\left(v_2(u_2)-v_1(u_1)\right)^+ u_1 -
    \left(v_2(u_2)-v_1(u_1)\right)^- u_2 \right]  & u_1 \le 0 < u_2,
    \end{cases}\\
    &\le 0.
  \end{align*}
  Hence $u_i(\tau,x)\ge 0$ for almost all $x$.
  
  Similarly,
  by using the convex entropy $\eta(u)=(u-1)^+$ we get
  \begin{equation*}
    \partial_t (u_i-1)^+ + \partial_x q_i^+(u_i) \le
    (-1)^{i+1}H(u_i-1)S(u_1,u_2)
  \end{equation*}
  in $\mathcal{D}'$, the set of distributions. By the same argument as before, we arrive at
  \begin{equation*}
    \int_\R \left[(u_1(\tau,x)-1)^+ + (u_2(\tau,x)-1)^+\right] \,dx \le
    \int_0^\tau\int_\R r(u_1,u_2)\,dxdt, 
  \end{equation*}
  with
  \begin{equation*}
    r(u_1,u_2)=S(u_1,u_2)(H(u_1-1)-H(u_2-1)).
  \end{equation*}
  We have that
  \begin{align*}
    r(u_1,u_2)&=
    \begin{cases}
      0 & u_1 < 1\ \text{and}\ u_2< 1,\\
      0 & u_1 > 1\ \text{and}\ u_2> 1,\\
      \left[\left(v_2(u_2)-v_1(u_1)\right)^+ u_1 -
    \left(v_2(u_2)-v_1(u_1)\right)^- u_2 \right]  & u_2 \le 1 < u_1,\\
     -\left[\left(v_2(u_2)-v_1(u_1)\right)^+ u_1 -
    \left(v_2(u_2)-v_1(u_1)\right)^- u_2 \right]  & u_1 \le 1 < u_2,
    \end{cases}\\
    &\le 0,
  \end{align*}
  if $u_1$ and $u_2$ are non-negative. 
\end{proof}

\begin{remark}
  There are also other invariant regions for this equation.  If 
  \begin{equation*}
    v_2\left(u_{2,0}(x)\right)\ge v_1\left(u_{1,0}(x)\right),
  \end{equation*}
  then 
  \begin{equation*}
    v_2\left(u_{2}(t,x)\right)\ge v_1\left(u_{1}(t,x)\right)
  \end{equation*}
  for $t>0$.
\end{remark}
\subsection{An example}\label{subsec:example}
We finish our discussion of the two-lane case by exhibiting an
example. The velocities on the two roads are
\begin{equation}\label{eq:twolanev}
  v_1(u)=1.5(1-u)\ \text{and}\ v_2(u)=2.5(1-u),
\end{equation}
and the initial data
\begin{equation}\label{eq:twolaneinit}
  u_{1,0}(x)=u_{2,0}(x)=\sin^2(\pi x/2).
\end{equation}
Of course, we do not have entropy solutions in closed form, so instead
we use a numerical approximation generated by the Engquist--Osher
scheme with $800$ grid points in the interval $[0,2]$.
Figure~\ref{fig:1} shows the computed solution at $t=0.375$, $t=0.75$,
$t=1.125$ and $t=1.5$. For comparison, we have also included the
single lane model with the (average of $v_1$ and $v_2$) speed
$v(u)=2(1-u)$.  We see that there is the expected change of lane to
the faster lane, and that a shock builds up in the fast lane to the left of 
the shock in the slow lane.
\begin{figure}[h]
  \centering
    \begin{tabular}{lr}
    \includegraphics[width=0.4\linewidth]{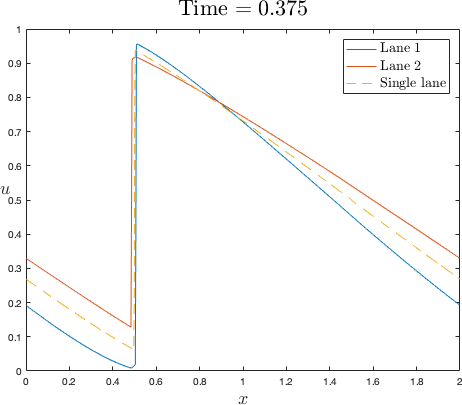}
    &\includegraphics[width=0.4\linewidth]{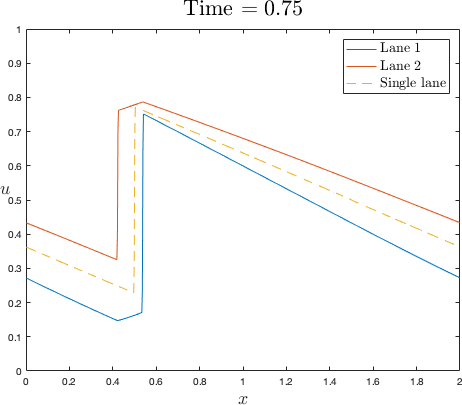}\\
    \includegraphics[width=0.4\linewidth]{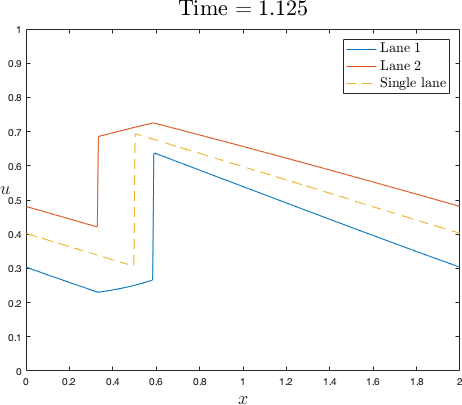}
    &\includegraphics[width=0.4\linewidth]{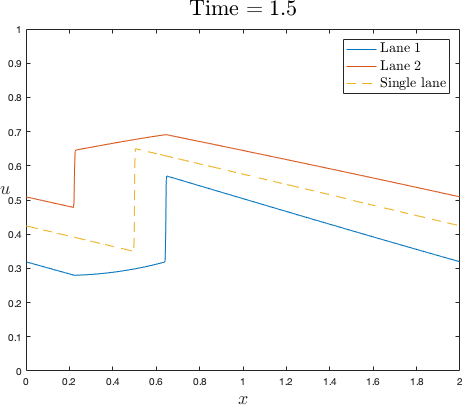}
  \end{tabular}
  \caption{The computed solutions of \eqref{eq:ligningen} with $v_1$
    and $v_2$ given by \eqref{eq:twolanev} and initial data given
    by \eqref{eq:twolaneinit}.}
  \label{fig:1}
\end{figure}
\section{Multilane model} \label{sec:cont2a}
The model \eqref{eq:ligningen} can be generalized to an arbitrary number of lanes.  Consider a road with $N$ lanes. Traffic is unidirectional and dense. 
Each lane has its specific velocity function $v_i$ depending only on the density in that lane, thus $v_i=v_i(u_i)$, where $u_i$ is the density in lane $i$. 

Assume that drivers prefer to drive in the faster lane, and this
tendency increases with the velocity difference with adjacent lanes. 
Thus the
flow \emph{from} lane $i$ \emph{to} lane $i+1$ equals
\begin{equation*}
  S_i(u_i,u_{i+1}) = \left[\left(v_{i+1}(u_{i+1})-v_i(u_i)\right)^+ u_i -
    \left(v_{i+1}(u_{i+1})-v_i(u_i)\right)^- u_{i+1} \right],
\end{equation*}
where we have scaled time such that the constant of proportionality is one.
We then get, in the analogous manner to the derivation
of \eqref{eq:ligningen}, that
\begin{equation}
  \label{eq:carbalance}
  \partial_t u_i + \partial_x \left(u_iv_i(u_i)\right) =
  S_{i-1}(u_{i-1},u_i) - S_i(u_i,u_{i+1}), \qquad i=1,\ldots,N,
\end{equation}
coupled with the boundary conditions
\begin{equation}
  \label{eq:noflow}
  S_0(u_0,u_1)=S_N(u_N,u_{N+1})=0.
\end{equation}

\begin{definition} \label{def:N_def_solution} Let $v_i=v_i(u_i)$ be
 Lipschitz continuous functions, and assume
  that $u_{i,0}\in L^1(\R)\cap L^\infty(\R)$, for $i=1,\dots,N$. We say
  that $u_{i}\in C([0,\infty); L^1(\R))$ is a weak solution of
  \eqref{eq:carbalance} with initial data $u_{i,0}$ if
  \begin{multline*}
    \int_0^\infty \int_\R\big( u_{i}\test_t+u_1
    v_i(u_i)\test_x+
    \left(S_{i}(u_{i},u_{i+1})-S_{i-1}(u_{i-1},u_{i+1})\right)\test
    \big)\,  dxdt\\
    + \int_\R u_{i,0}\test|_{t=0}\, dx=0, \quad i=1,\dots,N,
  \end{multline*}
  for all compactly supported test functions $\test\in
  C^\infty([0,\infty)\times \R)$.

  It is an entropy solution if
  \begin{multline}\label{eq:kruzN}
    \int_0^\infty \int_\R\big(  \eta(u_{i})\test_t+q_i(u_i)\test_x\big)\, dxdt+ \int_\R \eta(u_{i,0})\test|_{t=0}\, dx\\
    \ge\int_0^\infty \int_\R
    \eta'(u_i)
    \left((S_{i}(u_{i},u_{i+1})-S_{i-1}(u_{i-1},u_{i})\right)\test \,
    dxdt, \quad i=1,\dots,N,
  \end{multline}
  for all convex functions $\eta$,  and for all
  non-negative test functions $\test\in
  C^\infty_0([0,\infty)\times \R)$. Here $q_i$ is defined by
  $q_i'(u)=\eta'(u)f_i'(u)$ with $f_i(u)=u v_i(u)$.
\end{definition}
The wellposedness of the system of equations~\eqref{eq:carbalance} is
ensured by the following general theorem from \cite{HKR_weakly}, see also \cite{HKLR_split}.
\begin{theorem}[\cite{HKLR_split}, Theorem~3.13] \label{thm:existence}
  Assume that $v_i$ and $u_{i,0}$ are as in
  Definition~\ref{def:N_def_solution}. The there exists a unique
  entropy solution $u=\seq{u_i}_{i=1}^N$. Furthermore, if
  $\baru=\seq{\baru_i}_{i=1}^N$ is another entropy solution with
  initial data $\seq{\baru_{i,0}}_{i=1}^N$, then
  \begin{multline}
    \label{eq:generalstability}
    \sum_{i=1}^N \norm{u_i(t,\dott)-\baru_i(t,\dott)}_{L^1(\R)} \\
    \le 
    \sqrt{N} \exp\left(2N\sup_i\lipnorm{S_i} t\right) \sum_{i=1}^N
    \norm{u_{i,0}-\baru_{i,0}}_{L^1(\R)}. 
  \end{multline}
\end{theorem}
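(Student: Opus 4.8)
The plan is to treat the two assertions---existence of an entropy solution and the $L^1$ stability estimate---separately, since the latter also yields uniqueness by taking $\seq{u_{i,0}}=\seq{\baru_{i,0}}$. Because the system \eqref{eq:carbalance} is only weakly coupled (each equation is a scalar balance law in its own unknown, with the other unknowns entering solely through the Lipschitz source), I would construct solutions by operator splitting, consistent with \cite{HKLR_split}. Over a time step I alternate (i) solving the $N$ decoupled homogeneous conservation laws $\partial_t u_i+\partial_x f_i(u_i)=0$, each of which has a unique Kruzhkov entropy solution, and (ii) integrating the Lipschitz ODE system $\frac{d}{dt}u_i=S_{i-1}(u_{i-1},u_i)-S_i(u_i,u_{i+1})$ with the no-flow convention \eqref{eq:noflow}. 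Standard estimates give uniform $L^\infty$, $L^1$, and spatial $BV$ bounds together with time-continuity into $L^1$, so that a subsequence of the splitting approximations converges in $C([0,T];L^1_{loc})$; the remaining work is to verify that the Kruzhkov entropy inequalities \eqref{eq:kruzN} are inherited by the limit, which follows from consistency of the splitting as the time step tends to zero.

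For the stability estimate I would use Kruzhkov's doubling of variables, applied to each component separately. Comparing two entropy solutions $u$ and $\baru$ and doubling in the $i$-th equation yields, in the sense of distributions,
\begin{equation*}
  \partial_t \abs{u_i-\baru_i}+\partial_x\big[\sgn{u_i-\baru_i}(f_i(u_i)-f_i(\baru_i))\big]\le \sgn{u_i-\baru_i}\big(G_i(u)-G_i(\baru)\big),
\end{equation*}
where $G_i(u)=S_{i-1}(u_{i-1},u_i)-S_i(u_i,u_{i+1})$ is the net source for lane $i$. Integrating in $x$ against a test function approximating $\indicM{[0,\tau]}$ in time removes the flux term (a perfect $x$-derivative that integrates to zero over $\R$ by the $L^1$ bound) and leaves
\begin{equation*}
  \frac{d}{dt}\norm{u_i(t,\dott)-\baru_i(t,\dott)}_{L^1(\R)}\le \int_\R \abs{G_i(u)-G_i(\baru)}\, dx.
\end{equation*}
The Lipschitz continuity of the sources then bounds the right-hand side by $\lipnorm{S_i}$ times sums of the pointwise differences $\abs{u_j-\baru_j}$ of the coupled lanes.

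Writing $d_i(t)=\norm{u_i(t,\dott)-\baru_i(t,\dott)}_{L^1(\R)}$, the previous step produces a closed linear system of differential inequalities $\frac{d}{dt}d_i\le \sum_j A_{ij}d_j$, where the nonnegative matrix $A$ has entries controlled by $\sup_i\lipnorm{S_i}$, each row receiving a contribution from both $S_{i-1}$ and $S_i$. I would bound the Euclidean norm of the vector $d=(d_1,\ldots,d_N)$ by Gronwall's inequality using the operator norm $\norm{A}_2$, and then convert back to the $\ell^1$ sum through the elementary inequalities $\norm{d}_1\le \sqrt{N}\,\norm{d}_2$ and $\norm{d}_2\le \norm{d}_1$. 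This is the mechanism that produces the prefactor $\sqrt{N}$ and the exponent proportional to $2N\sup_i\lipnorm{S_i}\,t$ in \eqref{eq:generalstability}. Uniqueness follows by setting the initial data equal.

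The main obstacle is the rigorous handling of the source in the doubling-of-variables argument. Kruzhkov's method is tailored to homogeneous laws, and the zeroth-order term must be carried through the symmetric limit in the doubled variables without destroying the sign structure; one must show that its contribution is exactly $\sgn{u_i-\baru_i}(G_i(u)-G_i(\baru))$ and that the Lipschitz estimate retains enough of the coupling structure to yield the stated constant rather than a cruder bound. A secondary technical point is verifying that the entropy inequalities survive the splitting limit---i.e.\ establishing consistency of the operator-splitting approximation---which requires the uniform $BV$ bounds and a careful estimate of the splitting error over a single time step.
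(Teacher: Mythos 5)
The paper offers no proof of this theorem: it is imported directly from \cite{HKLR_split} (Theorem~3.13), see also \cite{HKR_weakly}, so there is no internal argument to compare against. Your outline is essentially the route taken in those references, and it is correct in structure: existence by operator splitting between the $N$ decoupled scalar conservation laws and the Lipschitz ODE system, with uniform $L^\infty$, $L^1$, $BV$ and time-continuity bounds giving compactness, and stability by componentwise Kru\v{z}kov doubling followed by a Gronwall argument for the vector $d=(d_1,\ldots,d_N)$ of $L^1$ distances. Your accounting for the constants is also the right mechanism: the $\sqrt{N}$ arises from passing between the $\ell^1$ and $\ell^2$ norms of $d$, and $2N\sup_i\lipnorm{S_i}$ is a crude bound on the operator norm of the coupling matrix, valid for a general weakly coupled system in which each source may depend on all $N$ components. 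Two remarks. First, for the specific nearest-neighbour coupling of \eqref{eq:carbalance} that matrix is tridiagonal, so your Gronwall step would in fact produce a growth rate independent of $N$; the bound \eqref{eq:generalstability} is simply the general-purpose estimate, and for this system it is superseded by the genuine $L^1$ contraction of Theorem~\ref{thm:L1contr}. Second, the two points you flag as the main obstacles --- carrying the zeroth-order term through the symmetric limit in the doubled variables, and verifying that the entropy inequalities survive the splitting limit --- are precisely the technical content of the cited references; your sketch correctly locates the work but does not carry it out, which is reasonable for a statement the paper itself only quotes.
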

A fundamental property of hyperbolic conservation law is the $L^1$ contractivity of solutions in the sense that the spatial $L^1$-norm of the difference between two
entropy solutions at a specific time does not increase in time.  This
property is in general lost for weakly coupled systems, or for scalar
conservation laws with a source. The general bound
\eqref{eq:generalstability} does not imply $L^1$
contractivity. However, for the system \eqref{eq:carbalance}, the
special form of the source yields $L^1$ contractivity for the whole solution, as the next theorem shows.
\begin{theorem}\label{thm:L1contr}
  Consider two entropy solutions $u=\seq{u_i}_{i=1}^N$ and
  $\bar{u}=\seq{\baru_i}_{i=1}^N$ of \eqref{eq:carbalance} with
  initial data $u_0=\seq{u_{i,0}}$ and $\baru_0=\seq{\baru_i}$,
  respectively.  Then we have
  \begin{equation}
    \label{eq:L1disc}
    \sum_{i=1}^N \int_\R \abs{u_i(x,t)-\baru_i(x,t)}\,dx \le
    \sum_{i=1}^N \int_\R \abs{u_{i,0}(x)-\baru_{i,0}(x)}\,dx.
  \end{equation}
\end{theorem}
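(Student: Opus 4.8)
The plan is to use the standard Kruzkov doubling-of-variables technique adapted to the weakly coupled system, but to exploit the specific algebraic structure of the source $S_i$ so that the source contributions cancel \emph{after summation over all lanes}, rather than merely being controlled by a Gronwall argument as in \eqref{eq:generalstability}. First I would apply the entropy inequality \eqref{eq:kruzN} with $\eta(u)=\abs{u-k}$ to each lane $i$ for the solution $u_i$ and, symmetrically, for $\baru_i$, then double variables and pass to the diagonal in the usual way. The outcome of this routine part is, for each $i$, a distributional inequality of the form
\begin{equation*}
  \partial_t \abs{u_i-\baru_i} + \partial_x \bigl(\sgn{u_i-\baru_i}(f_i(u_i)-f_i(\baru_i))\bigr) \le R_i,
\end{equation*}
where the right-hand side collects the source terms,
\begin{equation*}
  R_i = \sgn{u_i-\baru_i}\bigl[(S_{i-1}(u_{i-1},u_i)-S_{i-1}(\baru_{i-1},\baru_i)) - (S_i(u_i,u_{i+1})-S_i(\baru_i,\baru_{i+1}))\bigr].
\end{equation*}
Integrating in $x$ kills the flux divergence (by the $L^1\cap BV$ regularity), so that $\frac{d}{dt}\sum_i\int_\R\abs{u_i-\baru_i}\,dx \le \sum_i\int_\R R_i\,dx$, and the whole theorem reduces to showing that $\sum_{i=1}^N R_i \le 0$ pointwise.

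The crux is therefore the purely algebraic claim $\sum_i R_i\le 0$. Here I would reorganize the double sum by pairing each flux $S_i(u_i,u_{i+1})$ with the two lanes it couples: using the telescoping structure of \eqref{eq:carbalance} together with the boundary condition \eqref{eq:noflow}, the sum $\sum_i R_i$ reassembles into $\sum_{i=1}^{N-1}\bigl(S_i(u_i,u_{i+1})-S_i(\baru_i,\baru_{i+1})\bigr)\bigl(\sgn{u_{i+1}-\baru_{i+1}}-\sgn{u_i-\baru_i}\bigr)$. It then suffices to prove, for each adjacent pair, that the product of the source difference with the sign difference is nonpositive. Writing $D=S_i(u_i,u_{i+1})-S_i(\baru_i,\baru_{i+1})$, I would examine the four cases determined by the signs of $u_i-\baru_i$ and $u_{i+1}-\baru_{i+1}$; the sign factor is nonzero only when these disagree, so the content is that $D$ has the ``right'' sign precisely in those two mixed cases.

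The main obstacle — and the heart of the theorem — is verifying this sign condition on $D$, because $S_i$ is not monotone in its arguments in a naive sense: it switches between the $u_i$ branch and the $u_{i+1}$ branch depending on which velocity is larger. The key structural fact to exploit is that $v_i$ is strictly decreasing, so the branch selector $v_{i+1}(u_{i+1})-v_i(u_i)$ and the multiplied density are correlated in a way that makes $S_i$ behave like a monotone flux with the correct directionality. Concretely, I expect that $S_i$ is nondecreasing in $u_i$ and nonincreasing in $u_{i+1}$, which by the structure of the sign difference yields $D\cdot(\sgn{u_{i+1}-\baru_{i+1}}-\sgn{u_i-\baru_i})\le 0$ in each of the mixed cases. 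I would establish these two monotonicities by differentiating (or examining difference quotients of) the explicit expression for $S_i$ across its branch boundary, checking continuity of $S_i$ at the switching locus $v_{i+1}(u_{i+1})=v_i(u_i)$ so that no spurious positive jump arises there. This monotonicity is exactly the ``special form of the source'' alluded to in the paragraph preceding the theorem, and it is what distinguishes \eqref{eq:carbalance} from a generic weakly coupled system for which \eqref{eq:L1disc} would fail.
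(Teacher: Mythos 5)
Your argument is correct and reaches \eqref{eq:L1disc} by a genuinely different route than the paper. Both proofs rest on the same key structural fact, namely that on the invariant region $u_i\in[0,1]$ the source is monotone in the sense $\partial S_i/\partial a\ge 0$ and $\partial S_i/\partial b\le 0$; the paper verifies exactly these two inequalities by differentiating $S_i$ across the branch switch, just as you propose. The difference lies in how this monotonicity is exploited. You sum the Kato inequalities over the lanes and use the telescoping structure together with $S_0=S_N=0$ to rewrite $\sum_i R_i=\sum_{i=1}^{N-1}\bigl(S_i(u_i,u_{i+1})-S_i(\baru_i,\baru_{i+1})\bigr)\bigl(\sgn{u_{i+1}-\baru_{i+1}}-\sgn{u_i-\baru_i}\bigr)$, each summand of which is nonpositive by the monotonicity, so the contraction follows in one stroke. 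The paper instead works with $(u_i-\baru_i)^+$, uses the monotonicity only to bound the source contribution by $c\bigl[(u_{i-1}-\baru_{i-1})^++(u_{i+1}-\baru_{i+1})^+\bigr]$, runs a Gronwall argument on $\Theta(t)=\sum_i\int_\R(u_i-\baru_i)^+\,dx$ to obtain the comparison principle $u_{i,0}\le\baru_{i,0}\Rightarrow u_i\le\baru_i$, and then invokes the Crandall--Tartar lemma (using conservation of the total mass $\sum_i\int_\R u_i\,dx$) to upgrade monotonicity of the solution operator to $L^1$ contractivity. Your route avoids both Gronwall and Crandall--Tartar and makes the cancellation mechanism explicit, while the paper's route yields the comparison principle as a by-product. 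Two small points to tighten: the monotonicity of $S_i$ in its arguments holds only for nonnegative densities, so you should invoke the invariant region $u_i\in[0,1]$ before differentiating $S_i$; and on the coincidence set $\{u_i=\baru_i\}$ the doubling argument only pins the sign factor down to some value in $[-1,1]$, but your case analysis is robust to this, since whenever the sign difference is nonnegative the corresponding source difference $D_i$ is already nonpositive, and vice versa.
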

\begin{proof}
  By using Kru\v{z}kov's doubling of variables technique we get
  \begin{multline*}
    \partial_t \abs{u_i-\baru_i} + \partial_x
    \left[\sgn{u_i-\baru_i}(f_i(u_i)-f_i(\baru_i))\right] \\ \qquad
    \le -\sgn{u_i-\baru_i}
    \left[S_i(u_i,u_{i+1})-S_i(\baru_i,\baru_{i+1}) -
      S_{i-1}(u_{i-1},u_i)-S_{i-1}(\baru_{i-1},\baru_i)\right]
  \end{multline*}
  in $\mathcal{D}'$. Subtracting the equation for $u_i$ and adding
  the equation for $\baru_i$ we arrive at
  \begin{multline*}
    \partial_t (u_i-\baru_i)^+ + \partial_x\left[
      H(u_i-\baru_i)\left(f_i(u_i)-f_i(\baru_i)\right)\right]\\
    \qquad \le -H(u_i-\baru_i)
    \left[S_i(u_i,u_{i+1})-S_i(\baru_i,\baru_{i+1}) -
      S_{i-1}(u_{i-1},u_i)-S_{i-1}(\baru_{i-1},\baru_i)\right],
  \end{multline*}
  in $\mathcal{D}'$.
  Choosing $\test\approx \indicM{[0,\tau]}$ we infer that
  \begin{equation}
    \label{eq:plussstab}
    \begin{aligned}
      & \int_\R \left(u_i(x,\tau)-\baru_i(x,\tau)\right)^+\,dx \\
      &\quad\le \int_\R \left(u_i(x,0)-\baru_i(x,0)\right)^+\,dx
      +\int_0^\tau \int_\R
      H\left(u_i-\baru_i\right)\\
      &\qquad \times
      \left[S_{i-1}(u_{i-1},u_i)-S_{i-1}(\baru_{i-1},\baru_i)
        -\left(S_i(u_i,u_{i+1})-S_i(\baru_{i},\baru_{i+1})\right)\right]\,dxdt.
    \end{aligned}
  \end{equation}
  Recall that 
  \begin{equation*}
    S_i(a,b) = \left(v_{i+1}(b)-v_i(a)\right)^+ a -
    \left(v_{i+1}(b)-v_i(a)\right)^- b.
  \end{equation*}
  Now
  \begin{align*}
    \frac{\partial S_i}{\partial a} &=
    \left(v_{i+1}(b)-v_i(a)\right)^+ - H\left(v_{i+1}(b)-v_i(a)\right)
    v_i'(a)(a+b)\ge 0,\\
    \intertext{and}
    \frac{\partial S_i}{\partial b} &=
    H\left(v_{i+1}(b)-v_i(a)\right)v_{i+1}'(b)a -
    \left(v_{i+1}(b)-v_i(a)\right)^- \\
    &\qquad +
    H\left(-\left(v_{i+1}(b)-v_i(a)\right)\right) v_{i+1}'(b)a \le 0.
  \end{align*}
  So if $u_i>\baru_i$,
  \begin{align*}
    &S_{i-1}(u_{i-1},u_i)-S_{i-1}(\baru_{i-1},\baru_i)
    -\left(S_i(u_i,u_{i+1})-S_i(\baru_{i},\baru_{i+1})\right)\\
    &\qquad\le S_{i-1}(u_{i-1},\baru_i) - S_{i-1}(\baru_{i-1},\baru_i)
    -
    (S_i(u_i,u_{i+1})-S_i(u_i,\baru_{i+1}))\\
    &\qquad\le c
    \max\seq{u_{i-1},\baru_{i-1}}\left(u_{i-1}-\baru_{i-1}\right)^+ +
    c
    \max\seq{u_{i+1},\baru_{i+1}}\left(u_{i+1}-\baru_{i+1}\right)^+\\
    &\qquad\le c\left[\left(u_{i-1}-\baru_{i-1}\right)^+ +
      \left(u_{i+1}-\baru_{i+1}\right)^+\right],
  \end{align*}
  where $0<c<\abs{v_i'}$. Therefore
  \begin{align*}
    \sum_{i=1}^N
    &H\left(u_i-\baru_i\right)\left[S_{i-1}(u_{i-1},u_i)-S_{i-1}(\baru_{i-1},\baru_i)
      -\left(S_i(u_i,u_{i+1})-S_i(\baru_{i},\baru_{i+1})\right)\right]\\
    &\qquad \le 2c \sum_{i=1}^N \left(u_i-\baru_i\right)^+.
  \end{align*}
  Define
  \begin{equation*}
    \Theta(t) = \int_\R \sum_{i=1}^N
    \left(u_i(x,t)-\baru_i(x,t)\right)^+ \,dx,
  \end{equation*}
  then \eqref{eq:plussstab} and the above inequality imply that
  \begin{equation*}
    \Theta(T) \le \Theta(0) + 2c\int_0^t \Theta(t)\,dt.
  \end{equation*}
  Gronwall's inequality then implies that
  \begin{equation*}
    \Theta(T)\le \Theta(0)e^{2cT}.
  \end{equation*}
  Thus if $\Theta(0)=0$, i.e., $u_{i,0}(x)\le \baru_{i,0}(x) \;
  \text{a.e.}\; x$, then $\Theta(T)=0$ for $T>0$, i.e., $u_{i}(x,T)\le
  \baru_i(x,T) \; \text{a.e.}\; x$.

  By the Crandall--Tartar lemma \cite[Lemma 2.13]{HoldenRisebro}, this
  implies $L^1$ contractivity, i.e., if $u$ and $\baru$ are entropy
  solutions to \eqref{eq:carbalance} with initial data $u_0$ and
  $\baru_0$, then \eqref{eq:L1disc} holds for $t>0$.  
\end{proof}
One way to
enforce the boundary conditions \eqref{eq:noflow}, is to define
$ u_0(x,t)=u_1(t,x)$, $v_0(u)=v_1(u)$, $u_{N+1}(x,t)=u_N(x,t)$ and
$v_{N+1}(u)=v_N(u)$. Henceforth we will use this convention.
\begin{corollary} Consider two solutions $u_i$ and $\baru_i$ of \eqref{eq:carbalance} with initial data $u_{i,0}$ and $\baru_{i,0}$, respectively, 
in the sense of Definition \ref{def:N_def_solution}.  Then we have
\begin{equation}
  \label{eq:BVy}
  \sum_{i=1}^{N-1}
  \norm{u_{i+1}(\dott,t)-u_{i}(\dott,t)}_{L^1(\R)} 
  \le
  \sum_{i=1}^{N-1} \norm{u_{i+1,0}-u_{i,0}}_{L^1(\R)}.
\end{equation}
Furthermore, we have
\begin{equation}
  \label{eq:BVx}
  \sum_{i=1}^N \abs{u_i(\dott,t)}_{BV(\R)} \le \sum_{i=1}^N \abs{u_{i,0}}_{BV(\R)}.
\end{equation}
In addition
\begin{equation}\label{eq:L1time}
  \sum_{i=1}^N \norm{u_i(\dott,t+h)-u_i(\dott,t)}_{L^1(\R)}\le
  \sum_{i=1}^N \norm{u_i(\dott,h)-u_i(\dott,0)}_{L^1(\R)}.
\end{equation}
\end{corollary}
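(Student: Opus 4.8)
The plan is to read off all three bounds from the $L^1$ contraction of Theorem~\ref{thm:L1contr} (and the uniqueness in Theorem~\ref{thm:existence}), exploiting that \eqref{eq:carbalance} is autonomous in $t$ and homogeneous in $x$. The estimates \eqref{eq:BVx} and \eqref{eq:L1time} are then instances of translation invariance in $x$ and in $t$, respectively, whereas \eqref{eq:BVy} is the genuinely structural estimate and is obtained by a Kru\v{z}kov comparison between neighbouring lanes of the \emph{same} solution.

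For \eqref{eq:BVx} I would fix $\varepsilon\neq0$ and note that, since neither $f_i$ nor $S_i$ depends explicitly on $x$, the translate $u_i(\dott+\varepsilon,t)$ is again the entropy solution, now with initial data $u_{i,0}(\dott+\varepsilon)$; by \eqref{eq:L1disc}, $\sum_i\norm{u_i(\dott+\varepsilon,t)-u_i(\dott,t)}_{L^1(\R)}\le\sum_i\norm{u_{i,0}(\dott+\varepsilon)-u_{i,0}}_{L^1(\R)}$. Dividing by $\abs{\varepsilon}$, letting $\varepsilon\to0$, and invoking the lower-semicontinuity characterization $\abs{g}_{BV(\R)}\le\liminf_{\varepsilon\to0}\abs{\varepsilon}^{-1}\norm{g(\dott+\varepsilon)-g}_{L^1(\R)}$ (with equality when $g\in BV(\R)$) yields \eqref{eq:BVx}. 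For \eqref{eq:L1time} I would instead use autonomy in time: by uniqueness $u_i(\dott,t+h)$ is the entropy solution with initial data $u_i(\dott,h)$, and comparing it with $u_i(\dott,t)$ (initial data $u_i(\dott,0)$) through \eqref{eq:L1disc} is exactly \eqref{eq:L1time}.

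The estimate \eqref{eq:BVy} requires more care and is where the special form of the coupling enters. Here I would apply Kru\v{z}kov's doubling of variables to the two neighbouring lanes $u_{i}$ and $u_{i+1}$, which are compared with a common flux (any genuine lane-dependence of $f_i$ would generate extra flux-mismatch terms), obtaining in $\mathcal D'$ an inequality of the form $\partial_t\abs{u_{i+1}-u_i}+\partial_x[\,\sgn{u_{i+1}-u_i}(f(u_{i+1})-f(u_i))\,]\le\sgn{u_{i+1}-u_i}\,(2\sigma_i-\sigma_{i-1}-\sigma_{i+1})$, where $\sigma_i:=S_i(u_i,u_{i+1})$ and $\sigma_0=\sigma_N=0$ by \eqref{eq:noflow}. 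Integrating in $x$, summing over $i=1,\dots,N-1$, and regrouping the right-hand side by the values $\sigma_j$, the coefficient of $\sigma_j$ becomes $2a_j-a_{j-1}-a_{j+1}$ with $a_j:=\sgn{u_{j+1}-u_j}$. The decisive fact is the sign relation $a_j\,\sigma_j=\sgn{u_{j+1}-u_j}\,S_j(u_j,u_{j+1})\le0$, which holds because $v$ is decreasing and $u_j,u_{j+1}\in[0,1]$ (Lemma~\ref{lem:invariant} and its $N$-lane analogue): if $u_{j+1}>u_j$ then $\sigma_j\le0$, and if $u_{j+1}<u_j$ then $\sigma_j\ge0$. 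Combining $a_j\sigma_j=-\abs{\sigma_j}$ with $\abs{a_{j\pm1}}\le1$ shows $(2a_j-a_{j-1}-a_{j+1})\sigma_j\le0$ termwise, so the whole right-hand side is $\le0$ and $\sum_{i=1}^{N-1}\int_\R\abs{u_{i+1}-u_i}\,dx$ is nonincreasing, which is \eqref{eq:BVy}.

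I expect the main obstacle to be the source bookkeeping in \eqref{eq:BVy}: carrying out the doubling of variables so that the coupling appears precisely as the discrete second difference $2\sigma_i-\sigma_{i-1}-\sigma_{i+1}$, and then isolating the sign relation $\sgn{u_{j+1}-u_j}\,S_j(u_j,u_{j+1})\le0$. The two end indices $j=1$ and $j=N-1$ are each missing one cross-term and are therefore even more negative, so the boundary conditions \eqref{eq:noflow} cost nothing. Once the sign relation is in place the conclusion is immediate and, unlike \eqref{eq:generalstability}, needs no Gronwall factor; the arguments for \eqref{eq:BVx} and \eqref{eq:L1time} are then routine.
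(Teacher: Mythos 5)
Your handling of \eqref{eq:BVx} and \eqref{eq:L1time} is exactly the paper's: both are read off from the $L^1$ contraction \eqref{eq:L1disc} by choosing $\baru_{i,0}(x)=u_{i,0}(x+h)$ (divide by $h$, let $h\to0$) and $\baru_{i,0}=u_i(\dott,h)$, respectively. For \eqref{eq:BVy} you take a genuinely different route: the paper again invokes Theorem~\ref{thm:L1contr}, now with the lane-shifted data $\baru_{i,0}=u_{i+1,0}$, and tacitly identifies the resulting solution $\baru_i(\dott,t)$ with $u_{i+1}(\dott,t)$; you instead rerun the Kru\v{z}kov comparison directly on adjacent lanes of a single solution and kill the source through the regrouped coefficient $(2a_j-a_{j-1}-a_{j+1})\sigma_j$ together with the pointwise sign relation $\sgn{u_{j+1}-u_j}\,S_j(u_j,u_{j+1})\le 0$ (which I checked: with a common decreasing velocity and $u_j,u_{j+1}\ge0$ one indeed gets $a_j\sigma_j=-\abs{\sigma_j}$, hence each regrouped term is $\le0$). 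Your version is longer but more robust on two points where the paper's one-liner is fragile: the lane shift is a symmetry of the system only when all lanes carry the same velocity function, and even then the no-flux boundary \eqref{eq:noflow} is not shift-invariant (the equation satisfied by $u_2$ is not the first equation of the shifted system), whereas in your argument the end indices $j=1$ and $j=N-1$ simply lose a cross term and become more negative. The one caveat --- which you state explicitly and which is equally present, though unstated, in the paper --- is that a common flux across lanes is needed: with $v_i\neq v_{i+1}$ the sign relation fails (one can have $S_j>0$ while $u_{j+1}>u_j$), so \eqref{eq:BVy} as an exact contraction is really a statement about identical lanes, even though the corollary is phrased for the general system \eqref{eq:carbalance}. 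Nothing beyond the monotonicity of $S_i$ already used in Theorem~\ref{thm:L1contr} is required, so your proof is correct and, for \eqref{eq:BVy}, somewhat more careful than the paper's.
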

\begin{proof}
Setting $\baru_{i,0}=u_{i+1,0}$ for $i=1,\ldots,N$ yields \eqref{eq:BVy}.
Similarly, defining $\baru_{i,0}(x)=u_{i,0}(x+h)$, using
\eqref{eq:L1disc}, and sending $h$ to zero gives  \eqref{eq:BVx}.
To obtain time continuity we define $\baru_{i.0}(x)=u_i(x,h)$, to
get \eqref{eq:L1time}. 
\end{proof}
We also note the following useful estimates.
Define $f_i(u)=u v_i(u)$ and $\DM a_i=a_i-a_{i-1}$, divide by $h$ and let
$h\downarrow 0$ to find that
\begin{equation}\label{eq:altbv}
  \sum_{i=1}^N \norm{f_i(u_i)_x-\Delta^-
    S_{i}(u_i, u_{i+1})}_{L^1(\R)} \le
  \sum_{i=1}^N \norm{f_i(u_{i,0})_x-\Delta^- S_{i}(u_{i,0},u_{i+1,0})}_{L^1(\R)}.
\end{equation}
If we assume that the quantity on the left is bounded by $C$, then we
get
\begin{equation}
  \label{eq:timecont}
  \sum_{i=1}^N \norm{u_i(\dott,t+h)-u_i(\dott,t)}_{L^1(\R)}\le Ch.
\end{equation}
Furthermore, we have the useful observation
\begin{equation}\label{eq:useful}
  \sum_{i=1}^N \norm{\Delta^- S_{i}((u_{i},u_{i+1})(\dott,t))}_{L^1(\R)} \le C +
  \sum_{i=1}^N\abs{f_i(u_{i,0})}_{BV(\R)}.
\end{equation}
\subsection{An example}\label{subsec:manylane_example}
We also here include an example. For $i=1,\ldots,8$ we set
$u_{i,0}(x)=\sin^2(\pi x/2)$, and define 
\begin{equation}\label{eq:v8}
  v_i(u)=k_i(1-u), \quad k_i= \frac{13}{12} + \frac{i-1}{4}, \qquad i=1,\ldots,8.
\end{equation}
Also in this case the depicted solutions were calculated with the
Engquist--Osher scheme with $800$ grid points in the interval $[0,2]$. 
Figure~\ref{fig:2} shows the computed solutions at $t=0.375$,
$t=0.75$, $t=1.125$ and $t=1.5$. We see the expected change of lanes to
the faster lanes, and that a shock builds up in the faster lanes to the left of 
the  slower lanes.
\begin{figure}[h]
  \centering
  \begin{tabular}{lr}
    \includegraphics[width=0.4\linewidth]{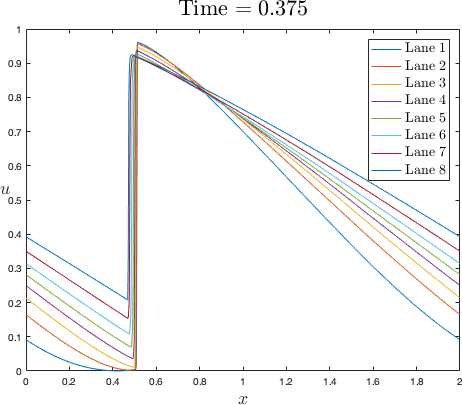}
    &\includegraphics[width=0.4\linewidth]{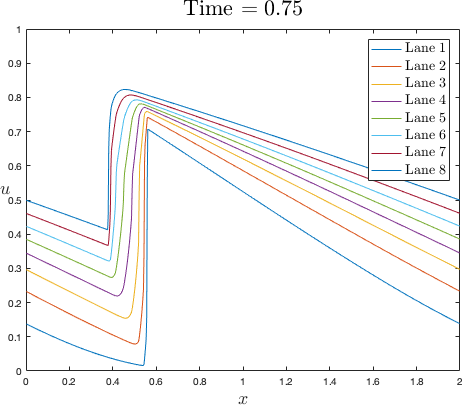}\\
    \includegraphics[width=0.4\linewidth]{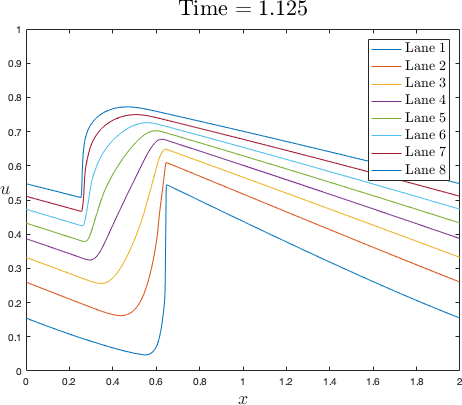}
    &\includegraphics[width=0.4\linewidth]{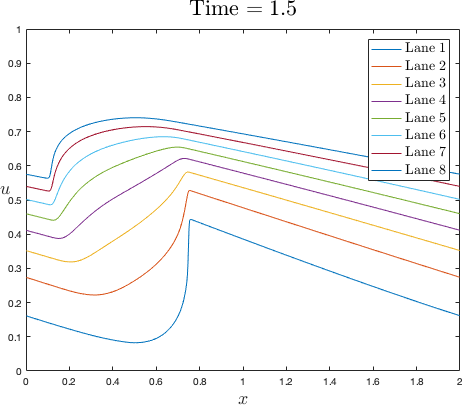}
  \end{tabular}
  \caption{The solution of \eqref{eq:carbalance} with $N=8$, and $v_i$
    given by \eqref{eq:v8}. Upper left: $t=0.375$; upper right:
    $t=0.75$; lower left: $t=1.125$; lower right $t=1.5$.} 
  \label{fig:2}
\end{figure}

\section{Infinitely many lanes --- the continuum limit}
\label{sec:inftylane}
It is natural, at least mathematically, to consider the case where the
lanes increase in number while at the same time get closer. Our aim in
this section is therefore to investigate limit as $N\to \infty$ in the
system in the previous section. 

To this end we let (the number of lanes) $N$ be a positive integer and
set $\Dy=1/N$. Let $y_i=(i-1/2)\Dy$ for $i=1,\ldots,N$. We shall also use the
``divided difference'' notation 
\begin{equation*}
  D^{\pm} a_i = \pm \frac{a_{i\pm 1}-a_i}{\Dy}.
\end{equation*}
For simplicity, we restrict our presentation to the case where
$v_i(u)=-k(y_i) g(u)$ where $g$ is a differentiable function with
$g'(u)>0$, $g(0)=-1$ and $g(1)=0$. Define
$f(u)=-ug(u)$. Throughout we will use the notation $f_i=f(u_i)$,
$g_i=g(u_i)$ and $k_i=k(y_i)$. Now we reintroduce the scaling constant
$K$ in \eqref{eq:diffdef}, and set $K=\kappa/\Dy^2$.  For the convenience of the reader
we set $\kappa=1$.
Thus, for $i=1,\ldots,N$, $u_i$ is the unique entropy (in the sense of
Definition~\ref{def:N_def_solution}) solution of the balance equation
\begin{equation}
  \label{eq:carbalance_new}
  \partial_t u_i + k_i \partial_x f(u_i) = \frac{1}{\Dy^2}\left[
  S_{i-1}(u_{i-1},u_i)-S_{i}(u_i,u_{i+1})\right],
\end{equation}
 with the boundary conditions 
\begin{equation*}
  u_0=u_1,\ u_{N+1}=u_N,\ k_0=k_1\ \text{and}\ k_{N+1}=k_N.
\end{equation*}
It is also useful to define the function $u_{\Dy}(t,x,y)$ by
\begin{equation} \label{eq:piecewiseconstant}
  u_{\Dy}(t,x,y)=
  \begin{cases}
    u_i(t,x), &\text{if $y\in [y_{j-1/2},y_{j+1/2})$,
      $i=1,\ldots,N-1$,}\\
    u_N(t,x) &\text{if $y\in [y_{N-1/2},1]$.}
    \end{cases}
\end{equation}
We shall investigate whether the family $\seq{u_{\Dy}}_{\Dy=1/N}$,
$N\in\N$ is compact and characterize the limit $\lim_{\Dy\to 0}
u_{\Dy}$. To this end we must show a number of estimates.

The right-hand
side of \eqref{eq:carbalance_new} equals
\begin{align}
  \frac{1}{\Dy^2}(S_{i-1}-S_{i}) &= - u_i\Dp\Dm V_i +
  \Dp u_i\left(\Dp V_i\right)^- - \Dm u_i \left(\Dm V_i\right)^+\notag\\
  &=u_i\Dp\Dm(k_ig_i) + \underbrace{\Dp u_i\left(\Dp k_ig_i\right)^+ - \Dm
  u_i\left(\Dm k_ig_i\right)^-}_{b_i}\notag\\
  &=\Dp\left(u_i\Dm(k_ig_i)\right) - \Dp u_i\Dm k_ig_i \notag \\
  &\qquad + \Dp u_i\left(\Dp k_ig_i\right)^+ - \Dm
  u_i\left(\Dm k_ig_i\right)^-\notag\\
  &=\Dp\left(u_i\Dm(k_ig_i)\right) + \DP u_i\left(\Dp k_ig_i\right)^-
  -
  \Dm u_i\left(\Dm k_ig_i\right)^-\notag\\
  &=\Dp\left(u_i\Dm(k_ig_i)\right) + \Dy\, \Dm\left((\Dp u_i)\left(\Dp
      k_ig_i\right)^-\right).\label{eq:source}
\end{align}
Thus \eqref{eq:carbalance_new} reads
\begin{equation}
  \label{eq:carbalance2}
  \partial_t u_i + k_i \partial_x f(u_i) =
  \Dp\left(u_i\Dm(k_ig_i)\right) + \Dy\, \Dm\left((\Dp u_i)\left(\Dp
      k_ig_i\right)^-\right) 
\end{equation}
for $i=1,\ldots,N$, and we have the boundary values
\begin{equation}
  \label{eq:carboundary}
  \Dm(k_1g_1)=\Dp(k_N g_N)=0.
\end{equation}
\begin{remark*}
  Observe that the above term $b_i$ is an upwind discretization of the
  transport term corresponding to $a u_y$, with $a=(kg)_y$.
\end{remark*}
 
Similarly to \eqref{eq:source}, we also get the expression
\begin{equation}\label{eq:altsource}
  \frac{1}{\Dy^2}\left(S_{i-1}-S_{i}\right) =
  \Dm\left(u_i \Dp\left(k_i g_i\right)\right) + \Dy \Dm\left((\Dp u_i) \left(\Dp k_ig_i\right)^+\right).
\end{equation}
Recall \eqref{eq:kruzN} with $\eta(u)=u^2/2$ and $\test$ an approximation to $\indicM{[0,T]}$.  That gives
\begin{align*}
  \frac12 \int_\R (u_i(x,T))^2 \,dx &\le \frac12 \int_\R
  (u_{i,0}(x))^2\,dx \\
  &\quad + \int_{\Pi_T} \left(u_i\Dp\left(u_i\Dm(k_ig_i)\right) + \Dy\,
  u_i \,\Dm\left(\Dp u_i\left(\Dp k_ig_i\right)^-\right)\right)\,dxdt,
\end{align*}
where $\Pi_T=[0,T]\times \R$.
We can sum this for $i=1,\ldots,N$, multiply with $\Dy$ and do a
summation by parts to get
\begin{multline} 
  \frac12 \Dy \sum_{i=1}^N  \int_\R (u_i(x,T))^2 \,dx \\
  + \Dy \sum_{i=1}^N \intom u_i\Dm(k_ig_i) \Dm u_i\,dxdt +
    \Dy^2\sum_{i=1}^N \intom \left(\Dp k_ig_i\right)^-
  \left(\Dp u_i\right)^2 \,dxdt \\
   \le\frac12 \Dy\sum_{i=1}^N \int_\R (u_{i,0}(x))^2\,dx. \label{eq:entro}
 \end{multline} 
It will be useful to lower bound the last two terms on the left-hand
side. 

Recall first that
\begin{equation} \label{eq:ubounds}
  0\le u_i \le 1,\ \abs{\Dp k_i}\le C \ \text{and}\ \Dy\sum_{i=1}^N \intom \abs{\Dp
    u_i} \,dxdt \le C,
\end{equation}
for some constant $C$ independent of $\Dy$. Using this and the fact that
$\max_{u\in[0,1]}\abs{g(u)}$ is bounded, as well as
\begin{equation} \label{eq:estD}
\Dy \abs{D^\pm u_i}\le C,
\end{equation}
we have that
\begin{align}
  \Dy^2\sum_{i=1}^N \intom \abs{g_i\Dp k_i}\left(\Dp
    u_i\right)^2 \,dxdt &\le C \Dy^2\sum_{i=1}^N \intom
  \left(\Dp
    u_i\right)^2 \,dxdt \notag\\
  &\le C\Dy\sum_{i=1}^N \intom  \abs{\Dp u_i} \,dxdt \le C.  \label{eq:est1A}
\end{align}
Furthermore, note that the same argument yields
\begin{equation}
  \Dy \sum_{i=1}^N \intom \abs{u_i g_{i-1}(\Dm k_i)(\Dm
    u_i)}\,dxdt \le C \Dy \sum_{i=1}^N \intom\abs{\Dm u_i}\,dxdt \le C.\label{eq:est1B}
\end{equation}

Observe that
\begin{equation*}
  \Dp k_ig_i = k_{i+1}\Dp g_i + g_i\Dp k_i,
\end{equation*}
and then use the inequality $(a+b)^- \ge a^- - \abs{b}$. Thus, since
$g'>0$,
\begin{align*}
  \left(\Dp k_ig_i\right)^- \left(\Dp u_i\right)^2 &\ge
  k_{i+1}\left(\Dp g_i\right)^-(\Dp u_i)^2 -
  \abs{g_i\Dp k_i}\left(\Dp u_i\right)^2\\
  &\ge c\left((\Dp u_i)^-\right)^3 - \abs{g_i\Dp k_i}\left(\Dp
    u_i\right)^2,
\end{align*}
where $0<c\le \min_i k_{i} \min_u g'(u)$. 
Similarly,
\begin{equation*}
  \Dm(k_ig_i)=k_i \Dm g_i + g_{i-1}\Dm k_i,
\end{equation*}
and therefore
\begin{equation*}
  u_i\Dm(k_ig_i) (\Dm u_i) \ge k_iu_i(\Dm g_i)(\Dm u_i) - \abs{u_i g_{i-1}(\Dm
    k_i)(\Dm u_i)}.
\end{equation*}
Note that due to the monotonicity of $g$ we have for some $\tilde u$ between $u_i$ and $u_{1-1}$, 
\begin{equation*}
k_iu_i\Dm g_i\Dm u_i = k_iu_i g'(\tilde u)(\Dm u_i)^2 \ge c u_i (\Dm u_i)^2 \ge 0.
\end{equation*}
We can now estimate the last two terms of the left-hand side of \eqref{eq:entro} from below. More 
precisely,
\begin{align*}  
& c\Dy^2\sum_{i=1}^N \intom \left((\Dp u_i)^-\right)^3 \,dxdt -
 \Dy^2\sum_{i=1}^N \intom \abs{g_i\Dp k_i}\left(\Dp u_i\right)^2 \,dxdt \\
 &\qquad +c\Dy\sum_{i=1}^N \intom u_i\left(\Dm u_i)\right)^2 \,dxdt
  -\Dy\sum_{i=1}^N \intom \abs{u_i g_{i-1}\Dm k_i\Dm u_i} \,dxdt \\
 &\qquad\le  \Dy^2\sum_{i=1}^N \intom \left(\Dp k_ig_i\right)^-
  \left(\Dp u_i\right)^2 \,dxdt +\Dy \sum_{i=1}^N \intom u_i\Dm(k_ig_i) \Dm u_i\,dxdt \\
  &\qquad\le \frac12 \Dy \sum_{i=1}^N  \int_\R (u_i(x,T))^2 \,dx+\Dy^2\sum_{i=1}^N \intom \left(\Dp k_ig_i\right)^-
  \left(\DP u_i\right)^2 \,dxdt \\
&\qquad\qquad\qquad
  +\Dy \sum_{i=1}^N \intom u_i\Dm(k_ig_i) \Dm u_i\,dxdt \\
  &\qquad\le \frac12 \Dy\sum_{i=1}^N \int_\R (u_{i,0}(x))^2\,dx,
\end{align*}
which we can rewrite as
\begin{align*}  
&c\Dy^2\sum_{i=1}^N \intom \left((\Dp u_i)^-\right)^3 \,dxdt +c\Dy\sum_{i=1}^N \intom u_i\left(\Dm u_i)\right)^2 \,dxdt \\
&\qquad\le
\frac12 \Dy\sum_{i=1}^N \int_\R (u_{i,0}(x))^2\,dx  +\Dy^2\sum_{i=1}^N \intom \abs{g_i\Dp k_i}\left(\Dp u_i\right)^2 \,dxdt \\
&\qquad\quad
 +\Dy\sum_{i=1}^N \intom \abs{u_i g_{i-1}(\Dm k_i)(\Dm u_i)} \,dxdt \\
& \qquad\quad +\Dy\sum_{i=1}^N \intom \abs{u_i g_{i-1}(\Dm k_i)(\Dm u_i)} \,dxdt  \\
&\qquad \le C,
\end{align*}
using \eqref{eq:est1A} and \eqref{eq:est1B}.

This implies that
\begin{equation}
  \label{eq:L3bound}
  \Dy^2 \sum_{i=1}^N \intom \left((\Dp u_i)^-\right)^3 \,dxdt
  \le C,
\end{equation}
and
\begin{equation}
  \label{eq:L2typebnd}
  \Dy \sum_{i=1}^N \intom u_i\left(\Dm u_i\right)^2 \,dxdt \le C.
\end{equation}
Observe that by \eqref{eq:estD}, \eqref{eq:L3bound} follows from \eqref{eq:L2typebnd}, viz.
\begin{equation*}
 \Dy^2 \sum_{i=1}^N \intom \left((\Dp u_i)^-\right)^3 \,dxdt
  \le   \Dy \sum_{i=1}^N \intom u_i\left(\Dm u_i\right)^2 \,dxdt \le  C.
\end{equation*}
By the same procedure, starting with \eqref{eq:carbalance2} but using
the alternate form \eqref{eq:altsource} of the right-hand side, we
arrive at the bounds
\begin{equation}
  \label{eq:altL3bound}
  \Dy^2 \sum_{i=1}^N \intom \left((\Dp u_i)^+\right)^3 \,dxdt
  \le C,
\end{equation}
and
\begin{equation}
  \label{eq:altL2typebnd}
  \Dy \sum_{i=1}^N \intom u_i\left(\Dp u_i\right)^2 \,dxdt \le C.
\end{equation}
Combining the two bounds \eqref{eq:L3bound} and \eqref{eq:altL3bound}
we get
\begin{equation}
  \label{eq:finalL3bound}
  \Dy^2 \sum_{i=1}^N\intom \abs{\Dp u_i}^3 \,dxdt \le C.
\end{equation}
In a similar manner, we find
\begin{equation}
  \label{eq:finalL3bound_minus}
  \Dy^2 \sum_{i=1}^N\intom \abs{\Dm u_i}^3 \,dxdt \le C.
\end{equation}

The other two bounds, \eqref{eq:L2typebnd} and \eqref{eq:altL2typebnd}
can be used for a continuity estimate. Write
$u_{i-1/2}=(u_i+u_{i-1})/2$ and compute for $\ell\ge m$
\begin{align*}
  \frac12\abs{u_\ell^2 - u_m^2} &= \frac{\Dy}{2} \Bigl|\sum_{i=m+1}^\ell \Dm u_i^2\Bigr|\\
  &=\Dy \Bigl|\sum_{i=m+1}^\ell u_{i-1/2} \Dm u_i\Bigr| \\
  &\le \Bigl(\Dy \sum_{i=m+1}^\ell u_{i-1/2}\Bigr)^{1/2}
  \Bigl(\Dy\sum_{i=m+1}^\ell u_{i-1/2}\left(\Dm
    u_i\right)^2\Bigr)^{1/2}\\
  &\le \sqrt{\Dy(\ell-m)} 
  \Bigl(\frac{\Dy}2 \sum_{i=1}^N u_i\left((\Dm u_i)^2 + (\Dp
    u_i)^2\right)\Bigr)^{1/2}.
\end{align*}
Squaring and integrating over $[0,T]\times \R$ gives
\begin{equation}
  \label{eq:L2cont}
  \intom \left(u_\ell^2 - u_m^2\right)^2 \,dxdt \le C(\ell-m)\Dy, \quad \ell\ge m.
\end{equation}

By direct computations we have that
\begin{equation*}
  \frac12 \Dm u_i^2=u_{i-1/2} \Dm u_i 
  =u_i \Dm u_i - \frac{\Dy}{2}\left(\Dm u_i\right)^2,
\end{equation*}
which gives
\begin{align*}
   \left(u_i\Dm u_i\right)^2  &= \frac14 \left(\Dm u_i^2\right)^2 +
   \Dy u_i \left(\Dm u_i\right)^3 - \frac{\Dy^2}{4}\left(\Dm
     u_i\right)^4\\
   &\le \frac14 \left(\Dm u_i^2\right)^2 +
   \Dy \abs{\Dm u_i}^3.
\end{align*}
Multiplying with $\Dy$ summing over $i$ and integrating in $x$, $t$,
gives the bound, using \eqref{eq:L2cont} with $m=i-1$, $\ell=i$ and \eqref{eq:finalL3bound_minus},
\begin{equation}
  \label{eq:L2derbnd}
  \Dy \sum_{i=1}^N \intom \left(u_i \Dm u_i\right)^2\,dxdt
  \le C.
\end{equation}
Note that this also follows from \eqref{eq:L2typebnd}, using that $u_i\in[0,1]$.

\subsubsection*{Convergence}
We assume that $u_0\colon\R\times[0,1]\to \R$ is such that $0\le
u_0(x,y)\le 1$ and that $u_0\in L^1\cap BV$.
Now we assume that the initial data $u_{i,0}$ are such that there is a
function $u_0(x,y)$ such that 
\begin{equation}  \label{eq:init_N}
  u_{i,0}(x)=\frac{1}{\Dy} \int_{y_{i-1/2}}^{y_{i+1/2}} u_0(x,y)\,dy
  \in L^1(\R) \ \text{for $i=1,\ldots,N$},
  \end{equation}
where $\Dy=1/N$ and $y_{i-1/2}=(i-1)\Dy$. Furthermore $0\le u_0(x,y)\le 1$. Since
$u_0\in BV(\R\times [0,1])$,
\begin{equation*}
  \Dy \sum_{i=1}^N \abs{u_{i,0}}_{BV(\R)} + \Dy \int_\R \sum_{i=1}^N
  \abs{D^{\pm}u_{i,0}} \,dx \le C
\end{equation*}
for some constant $C$ which is independent of $\Dy$. For convenience,
we have set $u_{0,0}=u_{0,1}$ and $u_{0,N+1}=u_{0,N}$.

We assume that $k\in C^1([0,1])$ is given, such that
$k'(0)=k'(1)=0$, and $k(y)>0$ for $y\in [0,1]$. Define 
  $k_i = k(y_i)$.
Let $u_i(t,x)$ be the entropy solutions to
\eqref{eq:carbalance2} with the boundary conditions
\begin{equation*}
  \Dm k_1 = \Dp k_N = 0, \quad \Dm u_1 = \Dp u_N = 0,
\end{equation*}
which actually is a special case of \eqref{eq:carboundary}. Then we
define 
\begin{equation*}
  u_{\Dy}(t,x,y)=u_i(t,x) \quad \text{for $y\in [y_{i-1/2},y_{i+1/2})$},
\end{equation*}
for $i=1,\ldots,N-1$ and $u_{\Dy}(t,x,y)=u_N(t,x)$ if $y\in
[y_{N-1/2},1]$.
We have that $0\le u_{\Dy}(t,x,y)\le 1$,
$\norm{u_{\Dy}(t,\dott,\dott)}_{L^1(\R\times[0,1])} =
\norm{u_0}_{L^1(\R\times[0,1])}$, and, using the bounds \eqref{eq:BVy}
and \eqref{eq:BVx}, $\norm{u_{\Dy}(t,\dott,\dott)}_{BV(\R\times[0,1])}\le
C$, where $C$ is independent of $\Dy$. Furthermore, using
\eqref{eq:L2cont},
\begin{equation*}
  \norm{u_{\Dy}(t,\dott,\dott)-u_{\Dy}(s,\dott,\dott)}_{L^1(\R\times[0,1])}
  \le C\abs{t-s},
\end{equation*}
where $C$ is independent of $\Dy$. This is sufficient to conclude that
there is a function $u \in C([0,\infty);L^1(\R\times [0,1]))$ and a
sequence $\seq{\Dy_j}_{j=0}^\infty$, $\Dy_j\to 0$ as $j\to \infty$, such that 
\begin{equation*}
  u=\lim_{j\to \infty} u_{\Dy_j} \quad \text{in $ C([0,\infty);L^1(\R\times [0,1]))$.} 
\end{equation*}
Furthermore, we have that $\Dm u_{\Dy_j} \rightharpoonup u_y$,
therefore $u_{\Dy_j}\Dm u_{\Dy_j} \rightharpoonup u u_y$. The bound
\eqref{eq:L2derbnd} ensures that $uu_y\in L^2([0,T]\times
\R\times[0,1])$.  

\begin{definition}\label{def:infinity}
  Set $\Omega=\R\times[0,1]$ and $\Omega_T=[0,T]\times \Omega$.
  Let $k=k(y)$ be as above, in particular
  $k'(0)=k'(1)=0$. We say that $u\in C([0,\infty);L^1(\Omega))$, such that
  $uu_y\in L^2(\Omega_T)$, is a
  weak solution to
  \begin{equation*}
    \begin{cases}
      u_t + kf(u)_x + (k' f(u))_y = \left(k u g_y\right)_y \quad t>0,
      \quad (x,y) \in  \R\times (0,1),\\
      g(u)_y = 0 \quad x\in \R, \quad y=0, \quad y=1,\\
      u(0,x,y)=u_0(x,y)\quad (x,y) \in \R\times (0,1),
    \end{cases}
  \end{equation*}
  if for all test functions $\test\in C^\infty_0(\Omega_T)$,
  \begin{equation*}
    \begin{aligned}
      \intomega\big( u\test_t + k f(u)&\test_x + k'f(u)\test_y\big)
      \,dydxdt
      = \intom \int_0^1 k u g'(u) u_y \test_y \,dydxdt\\
      &+\intomeg u(T,x,y) \test(T,x,y)\,dxdy - \intomeg u_0
      \test(0,x,y)\,dxdy.
    \end{aligned}
  \end{equation*}
\end{definition}

The aim is now to show that the limit $u$ is a weak solution in the
above sense. Since $u_i$ is a weak solution of
\eqref{eq:carbalance2}, we have
\begin{align}
  \intom \big(u_i\test_t + k_i f_i &\test_x - \Dp\left((\Dm k_i)
    f_i\right) \test\big)\,dxdt \label{eq:term1}\\
  &=
  -\intom \Dp\left(k_{i-1}u_i\Dm g_i\right) \test \,dxdt \label{eq:term2}\\
  &\quad -
  \Dy \intom \Dm\left(\Dp
    u_i\left(\Dp k_ig_i\right)^-\right) \test \,dxdt\label{eq:term3}\\
  &\quad +\int_\R  u_i(T,x) \test(T,x)\,dxdy - \int_\R
       u_{i,0} \test(0,x)\,dx \label{eq:term4}
\end{align}
for $i=1,\ldots,N$. We use  $\test=\test^i$ where
\begin{equation*}
  \test^i(t,x)=\frac{1}{\Dy}\int_{y_{i-1/2}}^{y_{i+1/2}} \test(t,x,y)\,dy
\end{equation*}
for a suitable test function $\test$. Next we multiply with $\Dy$ and
sum over $i=1,\ldots,N$ and do a summation by parts on the terms which
have $D^{\pm}(\cdots)$. This will give us
the weak formulation for $u_{\Dy}$. For simplicity we assume that
$\Dy=\Dy_j$, so that the whole sequence converges. Term by term we get
\begin{align*}
  \Dy\sum_{i=1}^N \text{\eqref{eq:term1}}&=
   \Dy\sum_{i=1}^N \intom \left(u_i\test^i_{t} + k_i f_i \test^i_x
   + (\Dm k_i)
    f_i\Dm \test^i \right)\,dxdt\\
    &\longrightarrow\; 
    \intom\Bigl( \int_0^1 u\test_t + kf\test_x + k' f\test_y
    \,dy\Bigr)\,dxdt 
\end{align*}
as $\Dy\to 0$. 

Turning to \eqref{eq:term2}, we have that 
\begin{equation*}
  \Dm g_i = g'(\tilde u_{i-1/2}) \Dm u_i = g'(u_i)\Dm u_i +
  g''(\xi_{i-1/2})(u_i - \tilde u_{i-1/2})\Dm u_i, 
\end{equation*}
where $\tilde u_{i-1/2}$ is between $u_i$ and $u_{i-1}$ and $\xi_{i-1/2}$ is
between $u_i$ and $\tilde u_{i-1/2}$. Therefore
\begin{align}
  \Dy \sum_{i=1}^N \text{\eqref{eq:term2}} &= \Dy \sum_{i=1}^N
  \intom k_{i-1} u_i g'(u_i) \Dm u_i \Dm\test^i \,dxdt \notag\\
  &\quad + \Dy\sum_{i=1}^N \intom  k_{i-1}u_i g''(\xi_{i-1/2})\left(u_{i}-\tilde u_{i-1/2}\right)
  \Dm u_i \Dm\test^i\,dxdt.\label{eq:smallterm}
\end{align}
The last term here vanishes as $\Dy\to 0$ since
\begin{align*}
  \abs{\text{\eqref{eq:smallterm}}} &\le C\Dy^2 \sum_{i=1}^N
    \intom u_i \left(\Dm u_i\right)^2 \abs{\Dm\test^i}
    \,dxdt\\
    &\le C\Dy,
  \end{align*}
where we used \eqref{eq:L2typebnd}. Hence
\begin{equation*}
  \Dy \sum_{i=1}^N \text{\eqref{eq:term2}} \;\longrightarrow\;
  \intomega
  k u g'(u) u_y \test_y \,dydxdt,
\end{equation*}
as $\Dy\to 0$.

Now for \eqref{eq:term3}, we have
\begin{align*}
  \Dy \Bigl|\sum_{i=1}^N \text{\eqref{eq:term3}}\Bigr| 
  &\le \Dy^2
  \sum_{i=1}^N \intom \abs{\DP u_i} \left(k_{i-1} \abs{\Dp
      g_i} + \abs{g_i}\, \abs{\Dp k_i}\right) \abs{\Dp \test^i}\,dxdt \\
  &\le C\Dy\Bigl( \Dy \sum_{i=1}^N \intom \left(\Dp u_i\right)^2 \,dxdt
  +
  \Dy\sum_{i=1}^N \intom \abs{\Dp u_i}\,dxdt\Bigr)
  \\
  &\le C\Dy\Bigl(\Dy\sum_{i=1}^N \intom \abs{\Dp u_i}\,dxdt
  \Bigr)^{1/2}\Bigl(\Dy\sum_{i=1}^N \intom \abs{\Dp
    u_i}^3\,dxdt\Bigr)^{1/2} \\
    &\qquad\qquad\qquad\qquad\qquad\qquad\qquad\qquad\qquad\qquad+ C\Dy
  \\
  &\le C\Dy\left(\frac{1}{\sqrt{\Dy}}+1\right),
\end{align*}
using \eqref{eq:ubounds}, \eqref{eq:finalL3bound}, and interpolation
between $L^1$ and $L^3$. Thus $\Dy \bigl|\sum_{i=1}^N\text{\eqref{eq:term3}}\bigr| \rightarrow
0$ as $\Dy\to 0$.

It is straightforward to show that 
\begin{equation*}
  \Dy \sum_{i=1}^N \text{\eqref{eq:term4}} \;\longrightarrow\;\intomeg
  u(T,x,y)\test(T,x,y) \,dydx - \intomeg
  u_0(x,y)\test(0,x,y) \,dydx.
\end{equation*}
Hence, the limit $u$ is a weak solution.

We can sum up the result of our arguments in the following theorem.
\begin{theorem} \label{thm:infinite_theorem}
  Let $k\in C^2([0,1])$ such that $k'(0)=k'(1)=0$, and $k(y)>0$ for
  all $y\in [0,1]$, and assume that $g=g(u)$ is a strictly increasing
  differentiable function such that $g(0)=-1$ and $g(1)=0$.
 
  Assume that  $u_0\in L^1(\Omega)\cap BV(\Omega)$ and let $u_{\Dy}$ be
  defined in \eqref{eq:piecewiseconstant} where $u_i$ solves
  \eqref{eq:carbalance_new} for $i=1,\ldots,N$.

  Then there exists a
  sequence $N_j\to \infty$ and correspondingly $\Dy_j=1/N_j\to 0$ such
  that the sequence of solutions $\seq{u_{\Dy_j}}_{j=1}^\infty$
  has a limit, i.e.,
  \begin{equation*}
    u=\lim_{j\to \infty} u_{\Dy_j} \quad \text{in $ C([0,\infty);L^1(\Omega))$.} 
  \end{equation*}
  The limit $u$ is a weak solution according to
  Definition~\ref{def:infinity}. 

  We also have the regularity estimate
  \begin{equation}
    \norm{u^2(y_1) - u^2(y_2)}_{L^2([0,T]\times\R)}^2  \le
    C\abs{y_1-y_2}, \quad y_1,y_2\in[0,1]. 
  \end{equation}
\end{theorem}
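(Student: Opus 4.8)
The plan is to produce the limit $u$ by a compactness argument and then to pass to the limit in the discrete weak formulation, assembling the a priori bounds established above; the three assertions (convergence in $C([0,\infty);L^1(\Omega))$, the weak formulation of Definition~\ref{def:infinity}, and the $L^2$ regularity estimate) are treated in turn. Since essentially every estimate needed is already in hand, the real content of the argument is to check that these bounds are strong enough to justify the passage to the limit in the nonlinear degenerate diffusion term.

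First I would establish compactness. The family $\seq{u_{\Dy}}$ satisfies $0\le u_{\Dy}\le 1$ and conserves mass, $\norm{u_{\Dy}(t,\dott,\dott)}_{L^1(\Omega)}=\norm{u_0}_{L^1(\Omega)}$; by \eqref{eq:BVy} and \eqref{eq:BVx} it is uniformly bounded in space, $\norm{u_{\Dy}(t,\dott,\dott)}_{BV(\Omega)}\le C$ with $C$ independent of $\Dy$; and by \eqref{eq:L1time} it is equicontinuous in time (the temporal modulus being reduced to that of the data, and improving to the Lipschitz bound \eqref{eq:timecont} when the data satisfy the bound in the preceding estimates). The BV bound gives uniform continuity of spatial translations in $L^1$, while finite speed of propagation in $x$ (the fluxes $f_i$ have bounded derivative on $[0,1]$ and the source is local in $x$) together with $u_0\in L^1$ furnishes tightness in $x$, uniformly in $\Dy$ on each time interval $[0,T]$. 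By the Kolmogorov--Riesz--Fr\'echet criterion the time slices $\seq{u_{\Dy}(t,\dott,\dott)}$ are then relatively compact in $L^1(\Omega)$, and combined with the temporal equicontinuity a vector-valued Arzel\`a--Ascoli argument, followed by a diagonal extraction over $T\to\infty$, yields a sequence $\Dy_j\to0$ and a limit $u\in C([0,\infty);L^1(\Omega))$ with $u_{\Dy_j}\to u$; passing to a further subsequence we may assume $u_{\Dy_j}\to u$ a.e.

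Next I would pass to the limit in the weak identity \eqref{eq:term1}--\eqref{eq:term4} after testing against $\test^i$, multiplying by $\Dy$, summing in $i$, and summing by parts. The convective contributions $\Dy\sum_i\text{\eqref{eq:term1}}$ and the data contributions $\Dy\sum_i\text{\eqref{eq:term4}}$ converge by the strong $L^1$ convergence of $u_{\Dy_j}$ and the smoothness of $\test$ and $k$. The term $\Dy\sum_i\text{\eqref{eq:term3}}$ vanishes as $\Dy\to0$, using \eqref{eq:ubounds}, the third-order bound \eqref{eq:finalL3bound}, and interpolation between $L^1$ and $L^3$. The delicate term is the diffusive one $\Dy\sum_i\text{\eqref{eq:term2}}$: after the Taylor expansion its correction \eqref{eq:smallterm} is $O(\Dy)$ by \eqref{eq:L2typebnd}, while the principal part $\Dy\sum_i\intom k_{i-1}u_ig'(u_i)\Dm u_i\,\Dm\test^i\,dxdt$ must converge to $\intomega k u g'(u)u_y\test_y$. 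I would factor the integrand as $\big(g'(u_i)\,k_{i-1}\,\Dm\test^i\big)\cdot\big(u_i\Dm u_i\big)$: the first factor converges strongly in $L^2(\Omega_T)$ on the support of $\test$ (by the a.e.\ convergence of $u_{\Dy_j}$, the continuity and boundedness of $g'$, and the smoothness of $k$ and $\test$), whereas the second factor $u_{\Dy_j}\Dm u_{\Dy_j}$ is bounded in $L^2(\Omega_T)$ by \eqref{eq:L2derbnd} and, since $\Dm u_{\Dy_j}\rightharpoonup u_y$ and $u_{\Dy_j}^2\to u^2$, converges weakly in $L^2$ to $uu_y$. The product of a strongly and a weakly convergent $L^2$ sequence passes to the limit, giving the diffusive term. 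This identification of the weak-$L^2$ limit of $u_{\Dy_j}\Dm u_{\Dy_j}$ with $uu_y$, and its coupling to the strong limit of $g'(u_{\Dy_j})$, is the main obstacle of the proof; collecting the four limits produces exactly the weak formulation of Definition~\ref{def:infinity}.

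Finally, the regularity estimate follows by passing to the limit in \eqref{eq:L2cont}. For fixed $y_1,y_2\in[0,1]$ choose indices $m=m(\Dy)$, $\ell=\ell(\Dy)$ with $y_1\in[y_{m-1/2},y_{m+1/2})$ and $y_2\in[y_{\ell-1/2},y_{\ell+1/2})$, so that $u_{\Dy}(\dott,\dott,y_1)=u_m$, $u_{\Dy}(\dott,\dott,y_2)=u_\ell$ and $(\ell-m)\Dy\to\abs{y_1-y_2}$. Since $u_{\Dy_j}\to u$ in $L^1(\Omega_T)$ with all functions uniformly bounded, also $u_{\Dy_j}\to u$ in $L^2(\Omega_T)$, and \eqref{eq:L2cont} reads $\norm{u_m^2-u_\ell^2}_{L^2(\Pi_T)}^2\le C(\ell-m)\Dy$; letting $\Dy=\Dy_j\to0$ yields $\norm{u^2(y_1)-u^2(y_2)}_{L^2(\Pi_T)}^2\le C\abs{y_1-y_2}$ for a.e.\ $y_1,y_2$, hence for all $y_1,y_2$ by the continuity of $y\mapsto u^2(\dott,\dott,y)$ in $L^2(\Pi_T)$ that this very estimate supplies. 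This completes the argument.
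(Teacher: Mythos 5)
Your proposal is correct and follows essentially the same route as the paper: compactness from the uniform $L^\infty$, $BV$, and time-continuity bounds, passage to the limit term by term in the summed weak formulation \eqref{eq:term1}--\eqref{eq:term4} using \eqref{eq:L2typebnd}, \eqref{eq:finalL3bound}, and \eqref{eq:L2derbnd}, and the regularity estimate by taking the limit in \eqref{eq:L2cont}. Your explicit strong-times-weak $L^2$ factorization for the diffusive term is a slightly more careful rendering of the paper's terser assertion that $u_{\Dy_j}\Dm u_{\Dy_j}\rightharpoonup uu_y$, but it is the same argument.
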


\subsection{An example}\label{subsec:cont_example}
To illustrate the continuum limit, we have tested the ``same'' initial
value problem as in Section~\ref{subsec:example} and
Section~\ref{subsec:manylane_example}. The relevant data are
\begin{equation*}
  u_0(x,y)=\sin^2(\pi x/2), \quad x\in \R, \ y\in (0,1),
\end{equation*}
and 
\begin{equation}
  \label{eq:vmanylane}
  k(y)=1+2y, \ y\in (0,1),\quad v(y,u)=k(y)(1-u).
\end{equation}
We have used $\Dy=1/60$ (i.e., 60 lanes) and solved
\eqref{eq:carbalance_new} using the Engquist--Osher scheme with $800$
grid points in the interval $[0,2]$.
Figure~\ref{fig:3} shows the computed density $u$ at four different
times. It is illuminating to compare this figure with
Figures~\ref{fig:1} and \ref{fig:2}. 
\begin{figure}[h]
  \begin{tabular}{lr}
    \includegraphics[width=0.5\linewidth]{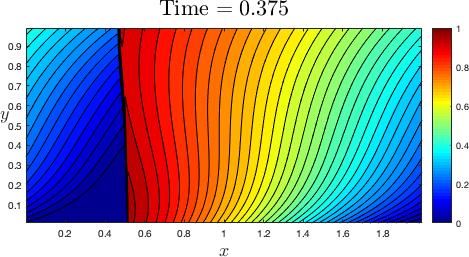}
    &\includegraphics[width=0.5\linewidth]{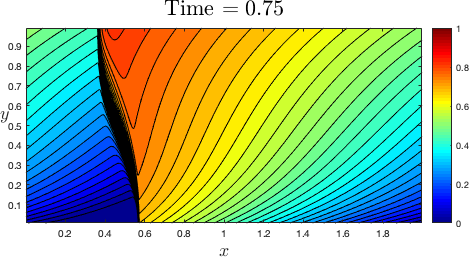}\\
    \includegraphics[width=0.5\linewidth]{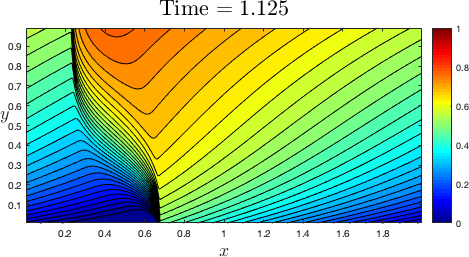}
    &\includegraphics[width=0.5\linewidth]{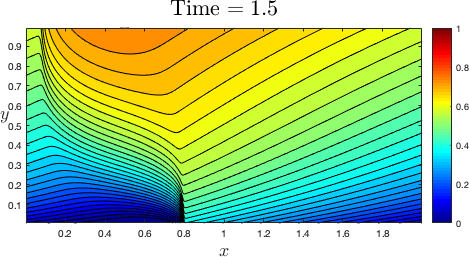}
  \end{tabular}
  \caption{The solution of \eqref{eq:carbalance_new} with $N=60$, and
    $v(y,u)$ given by \eqref{eq:vmanylane}. Upper left: $t=0.375$;
    upper right: $t=0.75$; lower left: $t=1.125$; lower right
    $t=1.5$.}
  \label{fig:3}
\end{figure}



\end{document}